\documentclass[11pt]{article}

\usepackage{amscd,amsmath,amssymb,amsfonts,amsthm,latexsym,mathrsfs}
\usepackage[utf8]{inputenc}
\usepackage[T1]{fontenc}
\usepackage{enumitem}
\usepackage{bbold}

\usepackage[all]{xy}
\usepackage{graphicx}
\usepackage{tikz}
\usepackage{hyperref}

\usepackage{subcaption}
\usepackage{graphpap}
\usepackage{makeidx,showidx}
\usepackage{wasysym} 
\usepackage{xcolor}
\usepackage{multicol}
\usepackage{marvosym}
\usepackage{lipsum}
\usepackage{mathtools}

\newtheorem{thm}{Theorem}[section]
\newtheorem{cor}[thm]{Corollary}

\theoremstyle{definition}

\newtheorem{lem}[thm]{Lemma}
\newtheorem{prop}[thm]{Proposition}
\theoremstyle{definition}
\newtheorem{defn}[thm]{Definition}
\newtheorem{rem}[thm]{Remark}

\numberwithin{equation}{section}

\definecolor{verdeazul}{rgb}{0.30,0.80,0.30}
\definecolor{miazul}{rgb}{0.60,0.75,0.90}

\def\P{\mathbb{CP}} 
\def\C{\mathbb{C}} 
\def\Z{\mathbb{Z}} 
\def\O{\mathcal{O}} 
\def\F{\mathcal{F}} 
\def\R{\mathbb{R}} 
\title{Analysis of the weight Diagram Associated with Foliations on the $\P^{2}$.}
\author{P. Rubí Pantaleón-Mondragón}

\begin{document}
\maketitle

\begin{abstract}
   We analyze the weight diagram associated with foliations on the complex projective plane through the Hilbert–Mumford criterion in Geometric Invariant Theory, focusing in particular on invariants such as the algebraic multiplicity and the existence of invariant curves.
\end{abstract}

\section{Introduction}
A foliation $\F_{d}$ of degree $d\geq 2$ with isolated singularities on the complex projective plane $\P^{2}$ (with  homogeneous coordinate $x,y,z$) is determined by a vector field \[X(\F_{d})=X=P(x,y,z)\frac{\partial}{\partial x}+Q(x,y,z)\frac{\partial}{\partial y}+R(x,y,z)\frac{\partial}{\partial z},\]
where $P,Q$ and $R$ are homogeneous polynomials of degree $d$ without common factor, defined up to  the addition of a multiple of the radial vector field by a  homogeneous polynomial of degree $d-1$. We will use the terms foliation $\F_{d}$ and vector field $X$ interchangeably whenever no confusion arises.

A point $p\in\P^{2}$ such that $X(p)=\alpha p$ for some $\alpha\in\C$, is called a singular point of the foliation $\F_{d}$. According to Gómez-Mont, Kempf, Campillo, and Olivares (\cite{GM-K,CO}), foliations  can be studied although their singularities. That is, one can analyze the type of singularity, its algebraic multiplicity, Milnor number, and the existence (or not) of invariant curves, among other properties, with the aim of classifying them. 

A powerful tool for  classifying  algebraic objects, which can also be applied to the study of foliations, is Geometric Invariant Theory (GIT). This theory decomposes a variety into strata of two types:  semistable points (which include the stable points) and unstable points, according to the action of a reductive group.  The usual action of a group $G$ on a variety $V$ identifies two objects as belonging to the same orbit if and only if they are isomorphic.  Gómez-Mont and Kempf \cite{GM-K} proved that, in the space of foliations, if  all the singularities of a foliation are distinct, then  the foliation belongs to the stable stratum, which is a dense open subset of the foliation space.  This  condition is not necessary: there exist stable foliations with repeated singularities  \cite{castorena2024git}. However, it is not trivial to provide explicit examples in general. Moreover, there is no explicit formula describing the foliation lying in each stratum with a particular property. For certain special cases, some results provide explicit examples of foliations, although they do not fully classify their singularities \cite{CR16,AlcantaraGrado1,AlcantaraGrado2}. 

When the foliation has a unique singular point, more results are known; see, for example \cite{AP19,L22,CDGM,castorena2024git,pantaleon16problema}. In \cite{A11}, the author describes the type of foliations of degree $2$ appearing in each stratum according to the  existence of invariant curves. Moreover, in \cite{A10U},  Alcántara classifies unstable foliations of degree $d\geq 2$ depending on their automorphism group. 

Another important interest in studying the stability of foliations lies in its relation to providing counterexamples to the Poincaré problem \cite{poincare1891integration}. This problem concerns the conditions on the degree of a first integral in terms of the information of the foliation. In \cite{alcantara2024some}, the authors present families of foliations with a unique singular point associated to pencils whose degree increases. That is, the degree of the first integral associated to a foliation is not bounded by the degree of the defining 1-form.  Although they mention that there are no concrete results to determine the relation between the instability of foliations and the instability of their associated pencil; the family they present consists of unstable foliations.

The preceding results motivate analysis of  the weight diagram associated with the space of foliations, in order to derive additional properties of a foliation from its stability. Thus, the organization of the  paper is the following: in the section \ref{preliminares} recalling some standard material on the GIT, the Mumford function, foliation and some of their invariant. In the section \ref{results} we present our results, we will analyze the weight diagram associated with foliations analyzing its multiplicity and existence of invariant curves.

\section{Preliminaries}\label{preliminares}

\subsection{Stability of Mumford}
Let $V\subset \P^{n}$ be a non-singular complex projective variety, and let $G$ be a reductive group acting linearly on $V$.

Let $\lambda:\C^{*}\rightarrow G$ be a one-parameter subgroup (1-PS) of $G$. There is a morphism, which we denote again by $\lambda$, given by

\[\begin{array}{rcl}
  \lambda:\C^{*} &\rightarrow & GL_{n+1}(\C) \\
    t &\mapsto & \begin{array}{ccc}
      \lambda(t):  \C^{n+1} &\rightarrow & \C^{n+1}  \\
         \nu & \mapsto & \lambda(t)\nu.
    \end{array}
  \end{array}\]

 This morphism is a diagonal representation of $\C^{*}$. Hence, there exists a basis $\{\nu_{0},\ldots,\nu_{n}\}$ of  $\C^{n+1}$ such that, for every $t\in\C^{*}$, \[\lambda(t)\nu_{i}=t^{r_{i}}\nu_{i}, \mbox{ with } r_{i}\in \Z.\] The number $r_{i}$  is called the {\bf weight} of $\nu_{i}$ with respect to the action $\lambda$ on $\C^{n+1}$.

With the above  notation.

\begin{defn} Let $x\in V\subset \P^{n}$ be a point,  and let $\lambda$ be a $1$-PS of $G$. If $\overline{x}\in\C^{n+1}$ is a representative of the class $x$, and  $\overline{x}=\sum_{i=0}^{n}a_{i}\nu_{i}$, then $\lambda(t)\overline{x}=\sum_{i=0}^{n} t^{r_{i}}a_{i}\nu_{i}$. The {\bf Mumford function} is defined as  
\[\mu(x,\lambda):=min\{r_{i}:a_{i}\neq 0\}.\]
\end{defn}

The Hilbert-Mumford criterion, used to detect unstable points under a linearly action,  can be stated as the following numerical condition:

\begin{thm}[Definition][Theorem 4.9, \cite{newstead51lectures}] \label{definicionEstabilidad}
\begin{enumerate}
    \item $x$ is stable (or semistable) if and only if $\mu(x,\lambda)<0$ $($  or $\mu(x,\lambda)\leq 0)$ for every $1$-PS $\lambda$ of $G$.
    \item $x$ is unstable if and only if there exists a $1$-PS $\lambda$ of $G$ such that $\mu(x,\lambda)>0$.
\end{enumerate}
\end{thm}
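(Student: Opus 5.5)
The plan is to derive both parts from the intrinsic definition of (semi)stability: $x$ is semistable if $0\notin\overline{G\cdot\overline{x}}$, stable if moreover the orbit $G\cdot\overline{x}$ is closed and the stabilizer of $\overline{x}$ is finite, and unstable otherwise. The argument has two directions, and the sign conventions follow $\mu(x,\lambda)=\min\{r_i : a_i\neq 0\}$ as defined above. The first step is the easy one-parameter computation. Fix $\lambda$ and diagonalize it, so that $\lambda(t)\overline{x}=\sum t^{r_i}a_i\nu_i$. If $\mu(x,\lambda)>0$, every surviving exponent is positive, so $\lambda(t)\overline{x}\to 0$ as $t\to 0$, and $0$ lies in the orbit closure; hence $x$ is unstable. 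If $\mu(x,\lambda)=0$, the limit as $t\to0$ exists and is nonzero; it equals the weight-zero part of $\overline{x}$. If the orbit were closed, this limit would lie in the orbit, and a short argument using the $\lambda$-fixed limit point would show that the stabilizer contains a conjugate of $\lambda(\C^*)$, which is not finite. So a stable point must satisfy $\mu<0$ for all $\lambda$. Applying the same argument to $\lambda^{-1}$, whose weights are $-r_i$, handles the case where the relevant limit is taken as $t\to\infty$.

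The second step is the converse, and this is the real content of the theorem: if $x$ is unstable, or semistable but not stable, then some 1-PS witnesses it. The key input I will invoke is the Hilbert--Mumford / Kempf destabilization lemma. If a closed $G$-invariant subset $Z$, such as $\{0\}$ or the boundary of a nonclosed orbit, meets $\overline{G\cdot\overline{x}}$, then there is a 1-PS $\lambda$ with $\lim_{t\to0}\lambda(t)\overline{x}\in Z$. I plan to prove it in three stages. First, use the Iwasawa/Cartan decomposition $G=K\cdot T\cdot K$, where $K$ is a maximal compact subgroup and $T$ a maximal torus, to reduce to the torus case. Second, for a torus, the orbit closure of $\overline{x}$ is described by the convex hull of the weights $\{r_i: a_i\neq0\}$. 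Third, $0\in\overline{T\overline{x}}$ exactly when $0$ is not in that convex hull, and then a separating rational hyperplane yields a cocharacter with all weights positive. This last point is where $\mu>0$ comes from.

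Assembling the pieces, I will combine the torus statement with the fact that every 1-PS is conjugate into $T$, and note that $\mu(g x, g\lambda g^{-1})=\mu(x,\lambda)$. That gives part 2, and part 1 follows by taking contrapositives in each direction. I expect the main obstacle to be the reduction from $G$ to a maximal torus. That reduction needs the properness of $K$ together with a limit argument showing that a point of $\overline{G\overline{x}}$ is reached along $k\,\lambda(t)\,k'\overline{x}$ for a single 1-PS. Since the statement is quoted from Newstead [Theorem 4.9], the paper may simply cite it, and I would follow that reference for this step.
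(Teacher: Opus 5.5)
\textbf{Verdict: there is a genuine gap in the converse for stable points; the rest of your proposal is sound.}

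The paper gives no proof of this statement. It quotes Newstead's Theorem 4.9 and even tags the statement as a definition. So deferring the reduction from $G$ to a maximal torus to that reference leaves you no worse off than the paper. The following parts of your proposal are correct: the easy direction (including the $\mu(x,\lambda)=0$ argument via the $\lambda$-fixed limit), the torus/convex-hull step, and the invariance $\mu(gx,g\lambda g^{-1})=\mu(x,\lambda)$.

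\textbf{The gap.} With your definition, ``semistable but not stable'' covers two situations. Either $G\overline{x}$ is not closed, \emph{or} $G\overline{x}$ is closed and the stabilizer $H=G_{\overline{x}}$ is infinite. The destabilization lemma only handles the first situation (together with the unstable case $Z=\{0\}$). In the second situation, any closed invariant $Z$ meeting $\overline{G\overline{x}}=G\overline{x}$ already contains $\overline{x}$. The trivial 1-PS then satisfies the lemma, so it gives you nothing. You still need a \emph{nontrivial} $\lambda$ with $\mu(x,\lambda)\geq 0$.

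If $H^{0}$ contains a nontrivial torus, a 1-PS inside it fixes $\overline{x}$ and gives $\mu(x,\lambda)=0$. But a priori $H^{0}$ could be unipotent and contain no copy of $\C^{*}$. You need one of two extra inputs here:
\begin{itemize}
\item Matsushima's criterion. A closed orbit $G/H$ of a reductive group in an affine space is affine, so $H$ is reductive. A positive-dimensional reductive $H$ contains a torus.
\item A direct Jacobson--Morozov argument. Suppose $\exp(sN)\in H$ with $N\neq 0$ nilpotent, and complete $N$ to an $\mathfrak{sl}_2$-triple $(N,h,M)$. Since $\overline{x}\in\ker\rho(N)$, it is a sum of highest-weight vectors, so all its $h$-weights are $\geq 0$. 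The 1-PS of $G$ generated by $h$ is then nontrivial and has $\mu(x,\lambda)\geq 0$.
\end{itemize}
Without one of these, part 1 for stable points does not follow by taking contrapositives.

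\textbf{The step you flag as the main obstacle.} Write $g_n=k_na_nk'_n$ with $k_n\to k$ and $k'_n\to k'$. This is the Cartan decomposition $KAK$, not Iwasawa.
\begin{itemize}
\item For $Z=\{0\}$, compactness does suffice. From $a_nk'_n\overline{x}\to 0$ you get $\langle\alpha_i,\log a_n\rangle\to-\infty$ for every weight $\alpha_i$ in the support of $k'\overline{x}$. Hence $0$ is not in their convex hull, and a separating cocharacter sends $k'\overline{x}$ to $0$.
\item For the boundary of a nonclosed orbit, compactness does not suffice, because the base points $k'_n\overline{x}$ move. Take $\C^{*}$ acting on $\C^{2}$ with weights $(1,-1)$. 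Then $x_n=(1/n,1)\to(0,1)$, but $n\cdot x_n=(1,1/n)\to(1,0)$, and $(1,0)\notin\overline{\C^{*}\cdot(0,1)}$.
\end{itemize}
So the nonclosed-orbit case really has to be imported from the literature, either Mumford's arc-plus-Iwahori argument or the results of Birkes and Kempf. It cannot come from the properness of $K$ alone.
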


If we consider the open subset of semistable points $V^{ss}$ and we restrict the action of $G$ on $V^{ss}$, then there exists a good quotient of this action. Moreover, Kirwan shows that the unstable points contains information about the quotient variety \cite{Kirwan}.

\begin{defn}
 Let $x\in V$. The automorphism group of $x$ is the stabilizer in $G$ of $x$.   
\end{defn}

\subsection{Foliations}
A degree $d$ foliation $\F_{d}$ on $\P^{2}$ (up to non-zero scalar multiplication) is defined by a vector field  
\begin{align}\label{Foliacion}
X=P(x,y,z)\frac{\partial}{\partial
       x}+Q(x,y,z)\frac{\partial}{\partial
       y}+R(x,y,z)\frac{\partial}{\partial z},
       \end{align}
where $P(x,y,z),Q(x,y,z)$ and $R(x,y,z)\in\C[x,y,z]$ are homogeneous polynomials of degree
$d$. Moreover,  for any homogeneous polynomial $G(x,y,z)\in \C[x,y,z]$ of
degree $d-1$, the vector fields \[X \mbox{ and }
X+G(x,y,z)\cdot(x\frac{\partial }{\partial
  x}+y\frac{\partial}{\partial y}+z\frac{\partial}{\partial z})\] define 
the same foliation. In this case, we say that they are equivalent.

We denote  by $\F(d;2)$ the space of foliations of degree $d$ on $\P^{2}$. According to \cite{GO}, the set $\F(d;2)$ is a projective space of dimension $d^{2}+4d+2$. 

We denote by $Sing(\F_{d})$ (or $Sing(X)$) the set of singular points of the foliation $\F_{d}$, that is, the points $p\in\P^{2}$ such that \[X(p)=\alpha p \mbox{ for some } \alpha\in \C.\] 

If $p=[1:0:0]\in\P^{2}$ is an isolated singularity  of the foliation $\F_{d}$. Then one can associate certain numerical invariants to $p$. 

Consider the vector field
\[  \chi_{d}=-g(y,z)\frac{\partial}{\partial y}+f(y,z)\frac{\partial}{\partial z},\] which gives a local representation of $\F_{d}$ in an open chart of $\P^{2}$ containing $p$.
The {\bf Milnor number} of $\F_{d}$ at $p$ is defined by \[\mu_{p}(\F_{d}):=\dim_{\C}\frac{\O_{0}(\C^{2})}{\langle f,g\rangle \cdot \O_{0}(\C^{2})},\] where $\O_{0}(\C^{2})$  denote the ring of regular functions at $(0,0)$.

By Jouanolou (\cite{J06}), \[\mu(\F_{d})=\sum\limits_{p\in Sing(\F_{d})} \mu_{p}(\F_{d})=d^{2}+d+1.\]

Moreover, in this case, the Milnor number can be interpreted as the intersection index of $g$ and $f$ at the point $0=(0,0)$, that is, \[\mu(\F_{d})=I_{0}(f,g),\] see \cite{fulton2008algebraic} by the properties of the intersection index.

If we write the components of $\chi_{d}$ as
\[
    f = f_{m}+f_{m+1}+\cdots, \mbox{ and }  ~ g = g_{s} +g_{s+1}+\cdots. 
\]
then, the {\bf algebraic multiplicity} of $\F_{d}$ at $p$ is defined as \[m_{p}(\F_{d}):=\min\{m,s\}.\]

\begin{defn}
    An irreducible plane curve defined by a polynomial $F(x,y,z)$ is an {\bf algebraic solution} for $\F_{d}$ or {\bf invariant} under $\F_{d}$ if and only if there exists a polynomial $H(x,y,z)$ such that 
    \[P(x,y,z)\frac{\partial F(x,y,z)}{\partial x}+Q(x,y,z)\frac{\partial F(x,y,z)}{\partial y}+R(x,y,z)\frac{\partial F(x,y,z)}{\partial z}=F(x,y,z)H(x,y,z).\]
\end{defn}

An important result concerning this definition was established by Jouanolou, Lins Neto and Soares.

\begin{thm}[\cite{neto1996algebraic}]
    For $d\geq 2$, the subset $\{\F_{d}\in\F(d;2) | \F_{d} \mbox{ has no algberaic solutions}\}$ is not-empty, dense in $\F(d;2)$, and contains an open dense subset.
\end{thm}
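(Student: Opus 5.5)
The statement asserts three things about the set $\mathcal{N}_d$ of degree $d$ foliations without algebraic solutions: it is non-empty, it is dense, and it contains an open dense subset. My strategy is to write its complement as a countable union of proper Zariski closed subsets. Then I would produce one explicit foliation lying outside all of them. Baire's theorem on the complex manifold $\F(d;2)\cong\P^{d^2+4d+2}$ then gives density. The open dense part comes from an algebraic reduction together with semicontinuity.

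\emph{Step 1: the countable union.} For each $k\geq 1$, let $\Sigma_k\subset \F(d;2)\times \P(\C[x,y,z]_k)$ be the set of pairs $(X,F)$ with $X(F)\in F\cdot\C[x,y,z]_{d-1}$. This set is Zariski closed. Indeed, invariance of $F$ is equivalent to a rank condition on the linear map $H\mapsto FH$ augmented by the vector $X(F)$, and that condition is polynomial in the coefficients of $X$ and $F$. It is also independent of the choice of representative of $X$ modulo the radial field, because $R(F)=kF$ by Euler's formula. Allowing reducible $F$ costs nothing, since a product is invariant exactly when its factors are. Projecting $\Sigma_k$ to $\F(d;2)$ gives a closed set $A_k$, since the second factor is projective. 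The complement of $\mathcal{N}_d$ is then $\bigcup_k A_k$.

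\emph{Step 2: an explicit example, which I expect to be the main obstacle.} I would take the Jouanolou foliation $X=y^d\partial_x+z^d\partial_y+x^d\partial_z$ and prove it has no invariant curve. The plan follows Jouanolou's argument.
\begin{itemize}
\item Compute the singular points: they satisfy $y^d=\alpha x$, $z^d=\alpha y$, $x^d=\alpha z$. All $d^2+d+1$ of them are simple, with eigenvalue data computable explicitly.
\item Use the automorphism group generated by the cyclic permutation and the diagonal group $\Z/(d^2+d+1)$. It acts transitively on the singular points.
\item Suppose an invariant curve $F$ exists. Applying the index formulas of Camacho--Sad type, $\sum_{p\in Sing\cap F} CS_p(F)=\deg(F)^2$, together with the relation $\deg(F)\,(d+2)$ obtained from the Baum--Bott or Gómez-Mont--Seade--Verjovsky index, and the specific eigenvalue ratios (which lie in a field where the needed rationality fails), yields a contradiction.
\end{itemize}
This is where the real difficulty lies, and I would rely on the classical computation for it.

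\emph{Step 3: conclusion.} Since $X\notin A_k$ for every $k$, each $A_k$ is a proper closed subset and hence nowhere dense. By Baire, $\mathcal{N}_d=\F(d;2)\setminus\bigcup_k A_k$ is dense; it is non-empty by Step 2. For the open dense subset, I would bound the degrees. On the open dense set $U$ of foliations whose singularities are all simple and non-degenerate with no rational Camacho--Sad ratios, the index sum above constrains $\deg F$ by the finitely many possible subset sums. This forces $k\le K(d)$. Then $U\setminus\bigcup_{k\le K(d)}A_k$ is a finite intersection of open dense sets, hence open and dense, and it is contained in $\mathcal{N}_d$.
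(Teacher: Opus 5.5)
The paper gives no proof of this statement; it only quotes it from Lins Neto--Soares, with non-emptiness and density going back to Jouanolou. So the proposal has to stand on its own.

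Steps 1 and 2, together with the Baire argument, are Jouanolou's classical proof of non-emptiness and density. They are sound, provided you actually cite Jouanolou's theorem for $y^d\partial_x+z^d\partial_y+x^d\partial_z$. Your sketch of that theorem is not sufficient on its own, and its stated mechanism is wrong:
\begin{itemize}
\item At every singular point of this foliation the eigenvalue ratio $\rho$ satisfies $\rho\notin\R$, $|\rho|=1$ and $\mathrm{Re}\,\rho\in\mathbb{Q}$.
\item The Camacho--Sad and GSV relations are then numerically satisfied by a curve of degree $d+2$ with a node at each of the $d^2+d+1$ singular points.
\item The contradiction comes from the genus formula applied to an irreducible component, not from any failure of rationality.
\end{itemize}

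The genuine gap is in the open-dense part of Step 3. There are two problems.

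First, your $U$ is not open. Near a foliation with a real irrational ratio $\rho_p$, the ratio is a non-constant holomorphic function of the foliation. So arbitrarily close foliations have $\rho_p\in\mathbb{Q}$, and $U\setminus\bigcup_{k\le K}A_k$ need not be open.

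Second, ``finitely many possible subset sums'' only bounds $k$ in terms of the particular foliation, namely $k^2\le\sum_p\max\{|\rho_p|,|\rho_p|^{-1},|\rho_p+\rho_p^{-1}+2|\}$. The ratios are unbounded on $U$: they blow up as a singularity approaches one with a zero eigenvalue. So no bound $K(d)$ depending only on $d$ follows.

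Both defects can be repaired as follows.
\begin{itemize}
\item Take $U$ to be the set of foliations with $d^2+d+1$ distinct singular points, all with non-real eigenvalue ratio. This set is open. It is dense because the ratios are non-constant holomorphic functions of the foliation, so the open mapping theorem applies.
\item At such singularities the only local invariant curves are the two smooth transverse separatrices. Hence for an invariant curve $C$ of degree $k$, the local GSV index is $1$ where $C$ contains one separatrix and $0$ at a node.
\item The identity $\sum_p \mathrm{GSV}_p(\F,C)=k(d+2-k)$ then forces $k\le d+2$ uniformly; this is Carnicer's bound.
\item Therefore $U\setminus\bigcup_{k\le d+2}A_k$ is open, dense (each $A_k$ is a proper algebraic subset by Step 2), and contained in $\mathcal{N}_d$.
\end{itemize}

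Alternatively, you can keep your Camacho--Sad bound but use it locally. On a small neighbourhood $W$ of each point of the corrected $U$ the ratios are bounded, which gives a bound $K_W$. The union of the sets $W\setminus\bigcup_{k\le K_W}A_k$ is then the required open dense subset of $\mathcal{N}_d$.
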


However, finding explicit examples of this type of objects is not an easy task.

\subsection{Action by coordinate change}
Since $\F(d;2)$ is a projective variety of dimension $d^{2}+4d+2$, we can consider the action of $SL_{3}(\C)$ on $\F(d;2)$ given by coordinate changes.

\[\begin{array}{rcl}
  SL_{3}(\C)\times \F(d;2)&\rightarrow & \F(d;2) \\
    (g,X)&\mapsto & g\cdot X=DgX(g^{-1}).
  \end{array}\]

  It is well known that every $1-$parameter subgroup of $SL_{3}(\C)$ can be written as 
  \[g\lambda(t)g^{-1}=g\begin{pmatrix} t^{k_{1}}&0&0\\ 0&t^{k_{2}}&0\\0&0& t^{k_{3}}\end{pmatrix}g^{-1}, \] for some $g\in SL_{3}(\C)$, where $k_{1}\geq k_{2}\geq k_{3}$ and $k_{1}+k_{2}+k_{3}=0$. 

  We denote by $\lambda_{(k_{1},k_{2})}$ the diagonal $1$-PS \[\begin{pmatrix} t^{k_{1}}&0&0\\ 0&t^{k_{2}}&0\\0&0& t^{k_{3}}\end{pmatrix},\] and we will assume that the integers $k_{1},k_{2},k_{3}$ are relatively primes. 

The action by coordinate change interacts well with the invariants of our interest.

\begin{prop}(\cite{Ramon})
    Let $g\in SL_{3}(\C)$ and $X$ be a vector field associated with the foliation $\F_{d}$. Then 
    \begin{itemize}
        \item $p\in Sing(\F_{d})$ if and only if $g(p)\in Sing(g\cdot \F_{d})$.
        \item $\mu_{p}(\F_{d})=\mu_{g(p)}(g\cdot \F_{d})$.
        \item $m_{p}(\F_{d})=m_{g(p)}(g\cdot \F_{d})$.
    \end{itemize}
\end{prop}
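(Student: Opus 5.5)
The plan is to express everything in affine charts and reduce all three items to one fact. A linear change of coordinates $g\in SL_{3}(\C)$ induces a biholomorphism of $\P^{2}$. It sends the affine chart containing $p$ to an affine chart containing $g(p)$, and it conjugates the local vector field $\chi_{d}$ of $\F_{d}$ at $p$ to the local vector field of $g\cdot\F_{d}$ at $g(p)$, up to multiplication by a unit.

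\textbf{First item.} Write $\overline{p}$ for a representative of $p$ and recall $g\cdot X=Dg\,X(g^{-1}\cdot)=g\,X(g^{-1}\cdot)$, since $g$ is linear. Evaluating at $g\overline{p}$ gives
\[(g\cdot X)(g\overline{p})=g\,X(\overline{p}).\]
Hence $X(\overline{p})=\alpha\overline{p}$ if and only if $(g\cdot X)(g\overline{p})=\alpha\, g\overline{p}$, which is exactly the first item. We should also check that adding a multiple of the radial field is compatible with the action. This holds because $g$ commutes with the radial field $R$, in the sense that $g\cdot(GR)=(G\circ g^{-1})R$. So the action is well defined on $\F(d;2)$ and the statement does not depend on the chosen representative $X$.

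\textbf{Reduction to the local setting.} Since $SL_{3}(\C)$ acts transitively on points, we may compose $g$ with elements fixing $[1:0:0]$ and sending the relevant points there. It then suffices to treat $p=g(p)=[1:0:0]$ with $g$ in the stabilizer, and to compare the local fields in the chart $x=1$. In that chart $g$ induces a local biholomorphism $\phi$ of $(\C^{2},0)$. This $\phi$ is the projective transformation $(y,z)\mapsto (g_{2}(1,y,z)/g_{1}(1,y,z),\,g_{3}(1,y,z)/g_{1}(1,y,z))$, and it has invertible linear part at $0$. Dehomogenizing $g\cdot X$ shows that its local representative is $u\cdot\phi_{*}\chi_{d}$ for some unit $u$. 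The unit comes from the factor $g_{1}$ introduced when dividing by the first coordinate, together with the radial correction.

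\textbf{Second and third items.} Both now follow from invariance under local biholomorphisms and multiplication by units.

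For the Milnor number, write $\phi_{*}\chi_{d}$ in components. Its component ideal is the pullback under $\phi^{-1}$ of the ideal generated by the components of $\chi_{d}$, multiplied by the invertible matrix $D\phi$. Multiplying by an invertible matrix of holomorphic functions does not change the ideal. Composing with a biholomorphism gives an isomorphism of the local rings. Multiplying by a unit $u$ also leaves the ideal unchanged. Therefore $\dim_{\C}\O_{0}/\langle f,g\rangle$ is preserved.

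For the algebraic multiplicity, $m_{p}=\min\{m,s\}$ is the largest $k$ such that both components lie in $\mathfrak{m}^{k}$. We check that this is preserved by each operation in turn:
\begin{itemize}
\item Composition with $\phi^{-1}$ preserves $\mathfrak{m}^{k}$, because $\phi^{-1}(0)=0$.
\item Multiplication by the matrix $D\phi$, whose value at $0$ is invertible, preserves the order. This is because the lowest-order parts transform by the invertible constant matrix $D\phi(0)$, so they cannot all cancel.
\item Multiplication by a unit preserves the order.
\end{itemize}

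\textbf{Main obstacle.} The delicate step is the bookkeeping in the dehomogenization. One must verify that $g\cdot X$ restricted to the chart really equals a unit times $\phi_{*}\chi_{d}$. This requires using the radial-field equivalence to kill the $\partial/\partial x$ component after the change of coordinates, and then checking that the resulting factor is a unit at $0$. I would handle this by writing $g$ with $g(\overline{p})=\overline{p}$, so that $g_{1}(1,0,0)\neq0$, and computing directly. For the Milnor number, one could alternatively invoke the general fact that the Milnor number of a vector field is invariant under analytic conjugacy and multiplication by units.
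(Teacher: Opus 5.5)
The paper does not prove this proposition. It quotes it from \cite{Ramon}, and it only defines $\mu_p$ and $m_p$ at $p=[1:0:0]$, through the affine representative with components $Q-yP$ and $R-zP$ evaluated at $(1,y,z)$. So there is no internal argument to compare with. Your route is correct, and it is the standard self-contained proof: you reduce all three items to the single fact that $g$ conjugates the affine representatives up to a unit. It also gives more than is stated, since any invariant of the local ideal or of the order of the representative is preserved.

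Two points need tightening.

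\emph{The dehomogenization step.} This is the step you flag as the main obstacle. It is a one-line computation, and the radial correction contributes nothing to the unit. Let $\pi\colon\C^{3}\setminus\{0\}\to\P^{2}$ be the projection and $\bar g$ the induced map on $\P^{2}$. From $\pi\circ g=\bar g\circ\pi$ you get $d\pi_{gv}(g\xi)=d\bar g_{\pi(v)}(d\pi_{v}\xi)$. From $\pi(sv)=\pi(v)$ you get $d\pi_{sv}=s^{-1}d\pi_{v}$. Radial fields lie in $\ker d\pi$, which is why the choice of $X$ modulo the radial field plays no role. Now take $w=(1,y,z)$, $\chi(y,z)=d\pi_{w}X(w)$, and write $gw=c\,(1,\phi(y,z))$ with $c=g_{1}(1,y,z)$. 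Homogeneity $X(sv)=s^{d}X(v)$ then gives
\[
\chi_{g\cdot X}(\phi(y,z))=d\pi_{g(c^{-1}w)}\bigl(c^{-d}\,gX(w)\bigr)=c^{1-d}\,D\phi_{(y,z)}\,\chi(y,z).
\]
So the unit is exactly $g_{1}(1,y,z)^{1-d}$, read through $\phi^{-1}$. It is nonvanishing at $p$ precisely when $g(p)$ lies in the chart $x\neq0$.

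\emph{The reduction to the stabilizer.} As written, your reduction to the stabilizer of $[1:0:0]$ is circular. Moving $p$ and $g(p)$ to $[1:0:0]$ already uses the statement for elements outside the stabilizer. You can fix this in either of two ways.
\begin{itemize}
\item State explicitly that $\mu_p$ and $m_p$ at an arbitrary point are \emph{defined} by transporting the point to $[1:0:0]$; this is the only definition the paper provides. Then the stabilizer case is exactly the well-definedness of this transport, and the general case follows formally.
\item Drop the reduction. The displayed formula holds for any $g$ as long as $p$ and $g(p)$ lie in the chart $x\neq0$, and similarly for the other standard charts. Translations in that chart belong to $SL_{3}(\C)$ with $g_{1}\equiv 1$, so centering at $p$ is covered by the same formula.
\end{itemize}

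With this in place, your Milnor number and multiplicity arguments go through as written. For the multiplicity, note that $\phi^{*}$ maps $\mathfrak{m}^{k}$ onto $\mathfrak{m}^{k}$, so the order is preserved in both directions and not merely non-decreasing.
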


where $g\cdot \F_{d}=g\cdot X$ for convenience. 

Let $X$ be a vector field defining a foliation as in  (\ref{Foliacion}). Consider a component of $X$ of the form
\[X_{0}=\begin{pmatrix}
    x^{\alpha_{1}}y^{\beta_{1}}z^{\gamma_{1}}\\
    x^{\alpha_{2}}y^{\beta_{2}}z^{\gamma_{2}}\\
    x^{\alpha_{3}}y^{\beta_{3}}z^{\gamma_{3}}\\
\end{pmatrix},\] and let   $\lambda_{(k_{1},k_{2})}$  be a $1$-PS of $SL_{3}(\C)$. Then the action of $\lambda_{(k_{1},k_{2})}$ on $X_{0}$ is given by 
\[\lambda_{(k_{1},k_{2})}\cdot X_{0}:=\lambda_{(k_{1},k_{2})}X_{0}\left(\lambda_{(k_{1},k_{2})}^{-1}\begin{pmatrix} x\\y\\z\end{pmatrix}\right) =\begin{pmatrix}
    t^{-k_{1}(\alpha_{1}-1)-k_{2}\beta_{1}-k_{3}\gamma_{1}}x^{\alpha_{1}}y^{\beta_{1}}z^{\gamma_{1}}\\
    t^{-k_{1}\alpha_{2}-k_{2}(\beta_{2}-1)-k_{3}\gamma_{2}}x^{\alpha_{2}}y^{\beta_{2}}z^{\gamma_{2}}\\
    t^{-k_{1}\alpha_{3}-k_{2}\beta_{3}-k_{3}(\gamma_{3}-1)}x^{\alpha_{3}}y^{\beta_{3}}z^{\gamma_{3}}\\
\end{pmatrix}.\]

Now, we consider the monomial vector fields

\begin{align}\label{FoliacionesMonomios}
    X_{i_{0},j_{0}}^{0,d}&= x^{d-j_{0}}y^{j_{0}-i_{0}}z^{i_{0}}\frac{\partial}{\partial x},
    \end{align}
\begin{align*}
    X_{i_{1},j_{1}}^{1,d}&= x^{d-j_{1}}y^{j_{1}-i_{1}}z^{i_{1}}\frac{\partial}{\partial y},\\
    X_{i_{2},j_{2}}^{2,d}&= x^{d-j_{2}}y^{j_{2}-i_{2}}z^{i_{2}}\frac{\partial}{\partial z}.
\end{align*}

with $j_{l}\geq i_{l}$ for all $l=0,1,2$ and $j_{l},i_{l}\in [d]^{0}:=\{0,1,2,\ldots,d\}$. Thus, the weights of components $X_{i_{l},j_{l}}^{l,d}$ with respect to $\lambda_{(k_{1},k_{2})}$ are given by:

\begin{align}\label{CondicionPesos}
C^{l,d}_{i_{l},j_{l}}:=-k_{1}(d-j_{l})-k_{2}(j_{l}-i_{l})-k_{3}i_{l}+k_{l+1} \mbox{ for } l=0,1,2.
\end{align}

\begin{rem}\label{K3K1}
Let us observe that, by the conditions $k_{1}+k_{2}+k_{3}=0$ and $k_{1}\geq k_{2}\geq k_{3}$, if there is $\lambda_{(k_{1},k_{2})}$ a $1$-SP such that $C^{l,d}_{i_{l},j_{l}}>0$ then $-2\leq \frac{k_{3}}{k_{1}} \leq -\frac{1}{2}$.
\end{rem}

Consider the set of generators of $\F(d;2)$ given by 
\[\left\{ X_{i_{0},j_{0}}^{0,d}, X_{i_{1},j_{1}}^{1,d},X_{i_{2},j_{2}}^{2,d}~| ~ j_{l}\geq i_{l}~\forall ~l=0,1,2, \mbox{ and } j_{l},i_{l}\in [d]^{0} \right\}.\] 

We can then  consider  a decomposition the space of scalar subrepresentations of $\C^{d^2+4d+3}$ as \[\C^{d^{2}+4d+3}=\bigoplus_r \alpha_r V_{\alpha_r},~r=1,\ldots,d^2+4d+3, \] where $\alpha_r: T\rightarrow \C$.

Moreover, to this representation we can associate a set of points in $\R^2$ given by 
\[
\left\{-L_{1}(d-j_{l}-i_{l})-L_{2}(j_{l}-2i_{l})+L_{l+1} | l=0,1,2. \right\}\subset \R^2.
\]
where $L_1= (1,0), L_2=(-\frac{1}{2},\frac{\sqrt{3}}{2})$ and $L_3=(-\frac{1}{2},-\frac{\sqrt{3}}{2})$.

We refer to this set of points in $\R^2$ as the {\bf weight diagram } of the representation. Some examples are illustrated in Figure (\ref{fig:Diagramas}).

\begin{figure}[ht]
\centering
\begin{subfigure}[b|]{0.45\linewidth}
\includegraphics[width=0.87\textwidth]{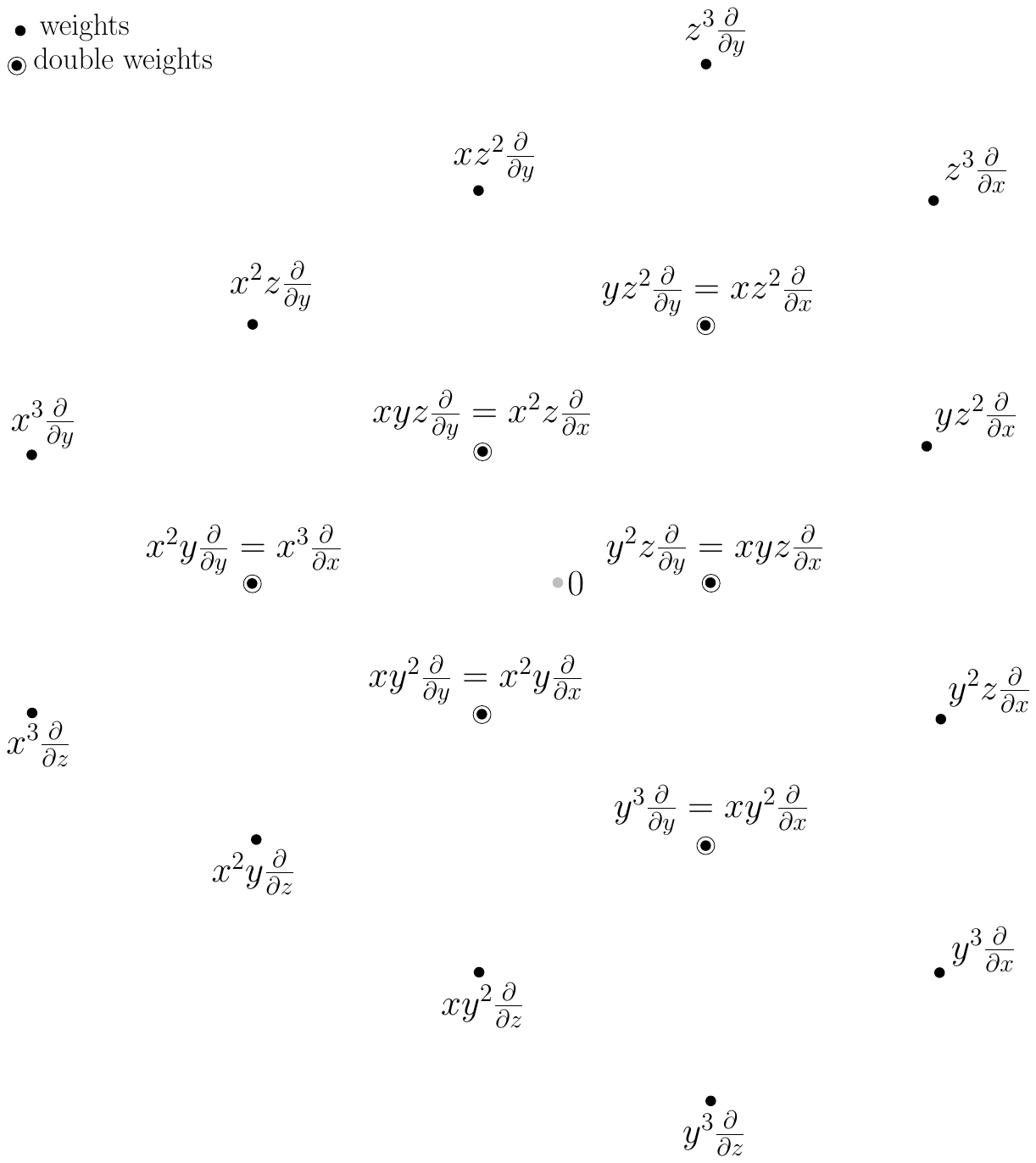}
\caption{Weight diagram of $\F(3,2)$.}
\label{fig:grado3}
\end{subfigure}
\begin{subfigure}[b]{0.45\linewidth}
\includegraphics[width=0.87\linewidth]{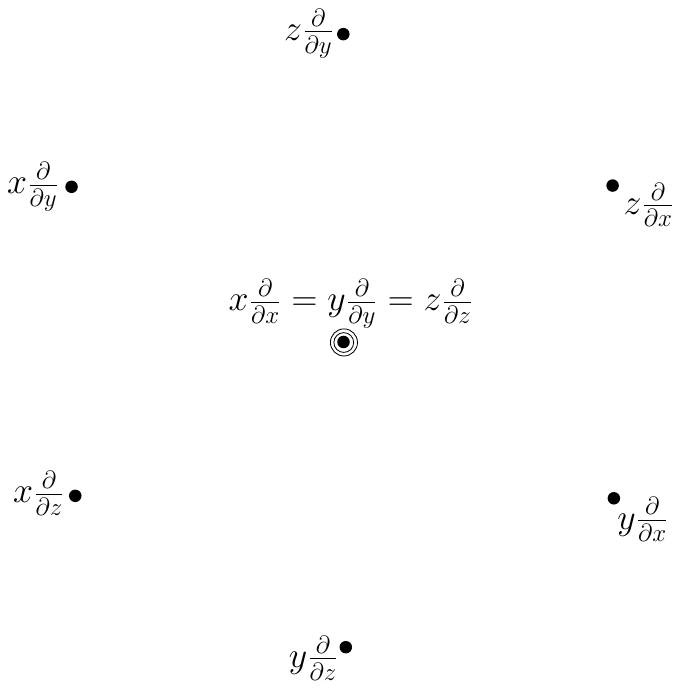}
\caption{Weight diagram of $\F(1,2)$.}  
\label{fig:grado1}
\end{subfigure}
\caption{Weight diagram of the representation.}
\label{fig:Diagramas}
\end{figure}

It is well known that the numerical criterion of the Theorem (\ref{definicionEstabilidad}) can  be interpreted as a criterion on convex sets in the weight diagram. More precisely, let  $v=[v_0:\dots:v_n]\in V\subset \P^n$, then:
\begin{itemize}
    \item  The point $v$ is semistable if $0\in  Conv\{\alpha_i\mid v_i\neq 0\}$.
    \item   The element $v$ is stable if $0$ lies in the interior of $Conv\{\alpha_i\mid v_i\neq 0\}$.
    \item The point $v$ is unstable if $0\notin Conv\{\alpha_i\mid v_i\neq 0\}$. 
\end{itemize}

\section{Results}\label{results}
In this section, we present some results concerning Mumford stability for foliations of degree $d$. We focus on the local invariants of their singularities.

\begin{defn}
    A point in the weights diagram is called {\bf double} if there are two weights of the representation  corresponding to the same point. 
\end{defn}
For example, in the Figure \ref{fig:Diagramas}, the origin is a double point in the weights diagram of $\F(1,2)$ (Figure \ref{fig:grado1}) but it is not a double point for $\F(3,2)$ (Figure \ref{fig:grado3}).

\begin{thm}
Let $m\geq 0$ be an integer number. The origin $(0,0)$ is a double weight in the weight diagram of the representation on $\F(d;2)$ if and only if $d=3m+1$.
\end{thm}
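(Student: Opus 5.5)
The plan is to compute the weights of the torus of diagonal matrices in $SL_{3}(\C)$ directly on the monomial vector fields (\ref{FoliacionesMonomios}), and then pass to the quotient by the radial field. The weights are read off from (\ref{CondicionPesos}). Write a monomial field as $x^{a}y^{b}z^{c}\,\partial_{l}$, where $a+b+c=d$ and $\partial_{0},\partial_{1},\partial_{2}$ stand for $\frac{\partial}{\partial x},\frac{\partial}{\partial y},\frac{\partial}{\partial z}$. Its character is
\[
-aL_{1}-bL_{2}-cL_{3}+L_{l+1},
\]
where $L_{1},L_{2},L_{3}$ are the weights of the standard representation. They satisfy $L_{1}+L_{2}+L_{3}=0$, and the only linear relation among them is this one. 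This linear form is exactly $C^{l,d}_{i_{l},j_{l}}$ evaluated at $(k_{1},k_{2},k_{3})$ with $k_{1}+k_{2}+k_{3}=0$.

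\emph{Step 1: the zero weights upstairs.} The weight vanishes if and only if $(a,b,c)-e_{l+1}=s(1,1,1)$ for some integer $s$, where $e_{l+1}$ is the standard basis vector. Summing the coordinates gives $d-1=3s$. Hence a zero weight exists only when $d\equiv 1 \pmod 3$, and then $s=m:=(d-1)/3\geq 0$. In that case there are exactly three zero-weight monomial fields,
\[
x^{m}y^{m}z^{m}\,x\tfrac{\partial}{\partial x},\qquad x^{m}y^{m}z^{m}\,y\tfrac{\partial}{\partial y},\qquad x^{m}y^{m}z^{m}\,z\tfrac{\partial}{\partial z},
\]
one for each $l$.

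\emph{Step 2: the quotient by the radial field.} The representation on $\F(d;2)$ is the quotient of the space of homogeneous degree $d$ vector fields by the subspace $\{G\cdot(x\partial_{x}+y\partial_{y}+z\partial_{z}) : \deg G=d-1\}$. This subspace is torus-invariant. The radial field has weight $0$, so $G$ times the radial field has weight equal to the weight of the monomial $G$ as a function. Such a weight vanishes exactly when $G=x^{m}y^{m}z^{m}$, and again this requires $d-1=3m$. Since the torus acts semisimply, the multiplicity of a weight in the quotient equals its multiplicity upstairs minus its multiplicity in the subspace. The zero weight therefore has multiplicity $3-1=2$ when $d=3m+1$, and multiplicity $0$ otherwise. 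As a consistency check, the count $3\binom{d+2}{2}-\binom{d+1}{2}=d^{2}+4d+3$ agrees with the dimension of the space quoted from \cite{GO}.

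\emph{Step 3: translating into the diagram.} This step gives both implications. For $d=3m+1$, the origin is hit by two weights, so it is a double point. For $d\not\equiv 1\pmod 3$, the origin is not a weight at all, so it cannot be double. The case $m=0$, i.e.\ $d=1$, fits this pattern; it matches Figure \ref{fig:grado1}, and Figure \ref{fig:grado3} illustrates the non-double case.

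The main obstacle is matching this character computation with the paper's explicit planar formula for the weight diagram. That formula is written in the indices $(i_{l},j_{l})$, so I would first check that it coincides with $-aL_{1}-bL_{2}-cL_{3}+L_{l+1}$ once one substitutes $a=d-j_{l}$, $b=j_{l}-i_{l}$, $c=i_{l}$ and uses $L_{1}+L_{2}+L_{3}=0$. I would also state explicitly that the diagram is the one for the quotient representation, since that is where the radial correction removes one of the three zero weights.
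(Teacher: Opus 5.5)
Your argument is correct, and the bookkeeping you flag as the main obstacle works out. Using $L_3=-L_1-L_2$ gives $-aL_1-bL_2-cL_3+L_{l+1}=-(a-c)L_1-(b-c)L_2+L_{l+1}$, with $a-c=d-j_l-i_l$ and $b-c=j_l-2i_l$, which is exactly the paper's planar formula.

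Your route differs from the paper's in two places.

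\emph{The radial quotient.} The paper uses a normal form: it assumes $i_2=0$, i.e.\ it takes the torus-stable complement in which the $\partial_z$-component lies in $\C[x,y]_d$. In that complement the upstairs zero-weight field $x^my^mz^{m+1}\partial_z$ never appears, and the paper exhibits the remaining two, $x^{m+1}y^mz^m\partial_x$ and $x^my^{m+1}z^m\partial_y$, directly. You instead keep all three upstairs zero-weight fields and subtract the one-dimensional zero-weight space spanned by $x^my^mz^m(x\partial_x+y\partial_y+z\partial_z)$. The paper's version gives an explicit basis of $\F(d;2)$ in which the zero-weight space is visibly spanned by the two fields used in the subsequent corollary. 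Yours avoids choosing a complement, shows which of the three upstairs zero weights is killed, and gives the count $d^2+4d+3$ as a check.

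\emph{The converse.} The paper solves $C^{l,d}_{i_l,j_l}=0$ by a case split on the ordering of $k_1,k_2,k_3$. You use that a weight sits at the origin exactly when its character is trivial, i.e.\ $(a,b,c)-e_{l+1}\in\Z(1,1,1)$, and sum coordinates to get $3\mid d-1$. Your formulation is the cleaner one here. It says directly that the linear form must vanish on the whole plane $k_1+k_2+k_3=0$, whereas a sub-case such as $k_1>k_2=k_3$ only tests a single 1-PS. That test can be misleading: for $d=4$ and $\lambda_{(2,-1)}$, the field $x^2y^2\partial_x$ has weight $0$, yet it sits at $(0,-\sqrt{3})$, not at the origin.
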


\begin{proof}
If $d=3m+1$, then $x^{m+1}y^{m}z^{m}$ and $x^{m}y^{m+1}z^{m}$ are monomials of degree $d$. 
    For $i_{0}=m$, $j_{0}=2m$, $i_{1}=m$ and $j_{1}=2m+1$ in the monomial vector fields, we have 
\begin{align*}
 C^{0,3m+1}_{i_{0},j_{0}}& =-k_{1}(m+1-1)-k_{2}m-k_{3}m= -m(k_{1}+k_{2}+k_{3})=0,\\
C^{1,3m+1}_{i_{1},j_{1}}& =-k_{1}m-k_{2}(m+1-1)-k_{3}m= -m(k_{1}+k_{2}+k_{3})=0.\\
\end{align*}

On the other hand, by the equivalent relation of foliations, we can assume $i_{2}=0$. Then, if \[C_{0,j_{2}}^{2,d}=-k_{1}(d-j_{2}-1)-k_{2}(j_{2}+1)=0  \mbox{ and } (k_{1},k_{2})=1, \] we have the following cases:
\begin{itemize}
    \item[*)] Case $k_{1}>k_{2}$: Then $j_{2}=-1$, which is a contradiction.
\end{itemize}
Therefore, if the origin is a double weight, it must occur in the monomial vector fields $X_{i_{0},j_{0}}^{0,d}$ and $X_{i_{1},j_{1}}^{1,d}$.
\begin{itemize}  
    \item[*)] Case $k_{1}>k_{2}=k_{3}$. If  
    \begin{align*}
        C^{0,d}_{i_{0},j_{0}}=& -k_{1}(d-j_{0}-1)-k_{2}(j_{0})=0, \mbox{ and }\\
        C^{1,d}_{i_{1},j_{1}}=&- k_{1}(d-j_{1})-k_{2}(j_{1}-1)=0,
    \end{align*}
    then $j_{0}=0$, $j_{1}=1$, and $d=1=3*0+1$. In this case, the monomial vector fields are \[X_{0,0}^{0,1}=x\frac{\partial}{\partial x} \mbox{ and } X_{0,1}^{1,1}=y\frac{\partial}{\partial y},\] see Figure (\ref{fig:grado1}).
    \item[*)] Case $k_{1}> k_{2} > k_{3}$. If 
     \begin{align*}
        C^{0,d}_{i_{0},j_{0}}=& -k_{1}(d-j_{0}-1-j_{0})-k_{2}(j_{0}-2i_{0})=0, \mbox{ and }\\
        C^{1,d}_{i_{1},j_{1}}=&- k_{1}(d-j_{1}-1)-k_{2}(j_{1}-i_{1}-2)=0.
    \end{align*}
    then $j_{0}=2i_{0}$, $j_{1}=2i_{1}+1$, and $d=3i_{0}+1=3i_{1}+1$.  Moreover, the monomial vector fields  are \[X^{0,3i_{0}+1}_{i_{0},2i_{0}}=x^{i_{0}+1}y^{i_{0}}z^{i_{0}}\frac{\partial }{\partial x } \mbox{ and } X^{1,3i_{0}+1}_{i_{0},2i_{0}+1}=x^{i_{0}}y^{i_{0}+1}z^{i_{0}}\frac{\partial}{\partial y}.\]
\end{itemize}
\end{proof}
\begin{cor}
    If $d=3m+1$ for some $m\geq 1$. If the  vector field $X$ contains a  component \[X_{m,2m}^{0,d}=x^{m+1}y^{m}z^{m}\frac{\partial}{\partial x} \mbox{ or } X_{m,2m+1}^{1,d}=x^{m}y^{m+1}z^{m}\frac{\partial}{\partial y},\]  then $X$ is a semistable point. Moreover, $X$ has a singular point $p$ satisfying \[m_{p}(X)\leq m+1.\]
\end{cor}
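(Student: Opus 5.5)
We argue in two parts, following the two claims of the corollary.

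\emph{Semistability.} By the preceding theorem, for $d=3m+1$ the monomials $X^{0,d}_{m,2m}$ and $X^{1,d}_{m,2m+1}$ have weight $C=-m(k_1+k_2+k_3)=0$ for every diagonal $1$-PS $\lambda_{(k_1,k_2)}$. So if the maximal torus is chosen so that $X$ has a nonzero coefficient on one of these monomials, the Mumford function satisfies $\mu(X,\lambda)\le 0$ for every diagonal $\lambda$. In the convex-hull language, $0$ belongs to $Conv\{\alpha_i\mid v_i\neq 0\}$ because it is one of the $\alpha_i$. The point needing care is that Theorem~\ref{definicionEstabilidad} quantifies over all $1$-PS $g\lambda g^{-1}$, not only diagonal ones. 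I would read the hypothesis ``$X$ contains the component'' as holding in the coordinates adapted to each torus under consideration, which is the convention implicit in the weight-diagram criterion. Then $\mu(X,g\lambda g^{-1})=\mu(g^{-1}\cdot X,\lambda)\le 0$ for all $\lambda$, and $X$ is semistable. I expect this to be the main obstacle: the coefficient of a fixed monomial is not invariant under change of coordinates, and it can also be altered by adding a multiple of the radial field. I would therefore state this reading explicitly and work with the normalized representative $i_2=0$ used in the proof of the theorem. Under that normalization the $\partial/\partial x$ and $\partial/\partial y$ components are not modified, so the coefficient of the zero-weight monomial is well defined.

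\emph{Multiplicity bound.} I would restrict attention to the axes and test the candidate singular point $p=[0:0:1]$, or, if needed, $[1:0:0]$. Take the chart $z=1$. The local vector field is obtained from $(P,Q,R)$ by $f=P-xR$, $g=Q-yR$, evaluated at $z=1$. The monomial $x^{m+1}y^m$ appears in $P(x,y,1)$ with total degree $2m+1$, and $x^my^{m+1}$ appears in $Q(x,y,1)$, also with degree $2m+1$. If $p$ is singular, the algebraic multiplicity is the least degree among the nonzero homogeneous parts of $f$ and $g$. The presence of a degree-$(2m+1)$ term therefore gives $m_p\le 2m+1$. This is weaker than the stated $m+1$, so this choice of point is wrong.

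To obtain $m+1$, I would instead look at the monomial from the vertex $[0:1:0]$ or $[0:0:1]$, and dehomogenize so that the exponent of the dehomogenized variable is as large as possible. Setting $y=1$ in $x^{m+1}y^mz^m$ gives $x^{m+1}z^m$, which has degree $2m+1$. Setting $x=1$ gives $y^mz^m$, which has degree $2m$. None of these choices produces degree $m+1$ directly.

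So the plan is as follows. Use Jouanolou's count $\sum_p\mu_p=d^2+d+1$, together with the relation $\mu_p\ge m_p^2$ from intersection multiplicity in \cite{fulton2008algebraic}. Combine this with the semistability just proved. The idea is that an unstable configuration, for instance a point of very high multiplicity, would force all weights onto one side of a line. Such a point would kill the zero-weight monomial. Concretely, suppose some singular point $p$ has $m_p\ge m+2$. Move $p$ to $[0:0:1]$ and show that every monomial of $P$, $Q$, $R$ then has degree at least $m+2$ in $x,y$. Then check that this excludes $x^{m+1}y^mz^m$ and $x^my^{m+1}z^m$ only if $2m+1<m+2$. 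That inequality fails for $m\ge1$. I therefore expect the multiplicity statement to require reinterpreting it, most likely as a bound at a singular point on the coordinate axes. Verifying the exact exponent $m+1$ is the step I anticipate being the real obstacle; I would first test it on $m=1$, $d=4$.
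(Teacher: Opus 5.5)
There is a genuine gap, and it cannot be closed, because the corollary is false as stated.

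The paper gives no separate proof. The corollary is read off from the preceding theorem via the convex-hull criterion in the fixed coordinates $x,y,z$. That is exactly your first paragraph, and it only gives $\mu(X,\lambda)\le 0$ for \emph{diagonal} $\lambda$, i.e.\ semistability for the maximal torus.

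Your fix, reading the hypothesis ``in the coordinates adapted to each torus'', is a different statement. It replaces the hypothesis by the much stronger assumption that $g\cdot X$ has a nonzero weight-zero component for every $g\in SL_{3}(\C)$.

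For the bound $m_{p}(X)\le m+1$ you end with no argument. Your heuristic that a high-multiplicity point ``kills'' the zero-weight monomial only works when that point is a coordinate vertex: for instance, $m_{[1:0:0]}(X)=d$ forces a representative whose three components lie in $\C[y,z]$. Since the hypothesis is tied to one fixed torus, you may not move $p$ to a vertex.

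Here is a counterexample to both claims, for every $m\ge 1$. Let $d=3m+1$ and $Y=y^{d}\frac{\partial}{\partial x}+z^{d}\frac{\partial}{\partial y}$, a field of type $X_{dd0}$.
\begin{itemize}
\item Solving $Y(p)=\alpha p$ gives $Sing(Y)=\{[1:0:0]\}$.
\item In the chart $x=1$ the local field is $(z^{d}-y^{d+1},-zy^{d})$, so the multiplicity there is $d$.
\item Under $\lambda_{(2,-1)}$ the two monomials have weights $d+2$ and $d-1$, so $\mu(Y,\lambda_{(2,-1)})=d-1>0$ and $Y$ is unstable.
\end{itemize}
Take $g\in SL_{3}(\C)$ with $g^{-1}(x,y,z)=(x,x+y+z,z)$. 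Then
\[
g\cdot Y=(x+y+z)^{d}\frac{\partial}{\partial x}+\bigl(z^{d}-(x+y+z)^{d}\bigr)\frac{\partial}{\partial y}.
\]
Its $\frac{\partial}{\partial z}$-component is $0$, so it is already the normalized representative ($i_{2}=0$). Its $\frac{\partial}{\partial x}$-component contains $x^{m+1}y^{m}z^{m}$ with coefficient $\frac{(3m+1)!}{(m+1)!\,m!\,m!}\neq 0$, so the hypothesis holds. Yet $\mu(g\cdot Y,g\lambda_{(2,-1)}g^{-1})=d-1>0$, so $g\cdot Y$ is unstable. Its only singular point, $[1:-1:0]$, has multiplicity $3m+1>m+1$.

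What can actually be proved is semistability with respect to the diagonal torus. Both $SL_{3}(\C)$-semistability and the multiplicity bound would need additional hypotheses.
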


We denote by $P(y,z)_{d}=\sum_{s=0}^{d} a_{s}y^{s}z^{d-s}$ and $Q(y,z)_{d}=\sum_{s=0}^{d} b_{s}y^{s}z^{d-s}$ the homogeneous  polynomials of degree $d$, where  $a_{i},b_{i}\in\C$.

\begin{thm}\label{ddd}
The foliation  
 $X_{ddd}=\begin{pmatrix}
    P(y,z)_{d}\\
    Q(y,z)_{d}\\
    c_{d}y^{d}\\
\end{pmatrix}\in \F(d;2)$ is unstable.
\end{thm}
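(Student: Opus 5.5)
The plan is to apply the Hilbert--Mumford criterion of Theorem \ref{definicionEstabilidad}(2) directly. I will exhibit a single diagonal $1$-PS $\lambda_{(k_1,k_2)}$ for which every monomial component of $X_{ddd}$ has strictly positive weight $C^{l,d}_{i_l,j_l}$. Then $\mu(X_{ddd},\lambda)>0$, and $X_{ddd}$ is unstable. Equivalently, in the weight diagram, all weights occurring in $X_{ddd}$ lie in an open half-plane not containing the origin.

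First I will read off which monomial vector fields can appear. Since $P$, $Q$ and the $z$-component do not involve $x$, each monomial $y^sz^{d-s}$ has $j_l=d$ and $j_l-i_l=s$, i.e.\ $i_l=d-s$, in the notation of (\ref{FoliacionesMonomios}). Substituting into (\ref{CondicionPesos}) gives the following weights:
\begin{align*}
C^{0,d}_{d-s,d}&=-k_2 s-k_3(d-s)+k_1 \quad (\text{terms } a_s y^s z^{d-s}\tfrac{\partial}{\partial x}),\\
C^{1,d}_{d-s,d}&=-k_2 s-k_3(d-s)+k_2 \quad (\text{terms } b_s y^s z^{d-s}\tfrac{\partial}{\partial y}),\\
C^{2,d}_{0,d}&=-k_2 d+k_3 \quad (\text{the term } c_d y^d\tfrac{\partial}{\partial z}).
\end{align*}

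Next I choose $(k_1,k_2,k_3)=(2,-1,-1)$. This satisfies $k_1\ge k_2\ge k_3$ and $k_1+k_2+k_3=0$, and it is consistent with Remark \ref{K3K1}, since $k_3/k_1=-1/2$. The three families of weights become:
\begin{itemize}
\item $C^{0,d}_{d-s,d}=s+(d-s)+2=d+2$;
\item $C^{1,d}_{d-s,d}=d-1$;
\item $C^{2,d}_{0,d}=d-1$.
\end{itemize}
All of these are positive for $d\ge 2$. Hence, whatever the coefficients $a_s,b_s,c_d$ are (not all zero), the minimum weight over the nonzero coordinates is at least $d-1>0$. Geometrically, this $1$-PS degenerates $X_{ddd}$ to zero as $t\to 0$, because it fixes the point $[1:0:0]$ with the dominant weight.

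The main obstacle is making sure the weights are computed on genuine coordinates of $\F(d;2)$, which is a quotient by radial multiples $G\cdot(x\partial_x+y\partial_y+z\partial_z)$. I will handle this as the paper does in the proof of the previous theorem, using the generating set of monomial vector fields with the normalization $i_2=0$ for the $z$-component. The key observation is that the radial field is $\lambda$-invariant, since it has weight $0$ for each of its three terms. So adding $G\cdot(\text{radial})$ to $X_{ddd}$ produces monomials of the same weights as the corresponding monomials of $G$. Thus the weight space decomposition descends to the quotient, and the class of $X_{ddd}$ is represented by the components listed above. Once this is settled, the inequality $\mu(X_{ddd},\lambda_{(2,-1)})\ge d-1>0$ together with Theorem \ref{definicionEstabilidad}(2) finishes the proof.
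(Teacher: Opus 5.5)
Your proof is correct and is the paper's argument in dual form. The paper writes each weight as a linear function of $(k_1,k_2)$ and observes that all the resulting points lie in the half-plane where the first coordinate is at least $\frac{d-1}{2}$; the linear functional cutting out that half-plane is exactly the one given by $\lambda_{(2,-1)}$, whose weights are your $d+2$ and $d-1$. You simply make the separating 1-PS explicit, and you also correctly use $k_3=-k_1-k_2$ in the $\partial/\partial z$ weight, where the paper's displayed intermediate formula has a harmless sign slip.
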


\begin{proof}
Since the exponents of the components $P(y,z)_{d}$ and $Q(y,z)_{d}$ are of the form $(0,\beta_{i},d-\beta_{i})$ for $i=1,2$ and $0\leq \beta_{i}\leq d$, we obtain the following equations:
\begin{align*}
   1) &  k_{1}-k_{2}\beta_{1}-k_{3}(d-\beta_{1}) =k_{1}-k_{2}\beta_{1}+(k_{1}+k_{2})(d-\beta_{1})=(1+d-\beta_{1})k_{1}+(d-2\beta_{1})k_{2}.\\
   2) &  -k_{2}(\beta_{2}-1)-k_{3}(d-\beta_{2}) =-k_{2}(\beta_{2}-1)+(k_{1}+k_{2})(d-\beta_{2}) =k_{2}(1+d-2\beta_{2})+(d-\beta_{2})k_{1}.\\
 3) & -k_{2}d+(k_{1}-k_{2})= k_{2}(-d-1)+k_{1}.
\end{align*}

We now consider the following set of points on $\R^{2}$:
\[\{(1+d-\beta_{1},d-2\beta_{1})~|~0\leq \beta_{1}\leq  d\} \cup \{(d-\beta_{2}, 1+d-2\beta_{2})~|~0\leq \beta_{2} \leq d\} \cup \{(1,-d-1)\}.\]

After a change of basis, we obtain: $W=\bigcup_{h=0}^{2} W_{h}$,  where 
\begin{align*}
    W_{0}=& \{(1+\frac{1}{2}d, (d-2\beta_{1})\frac{\sqrt{3}}{2})~|~ 0\leq \beta_{1}\leq d\},\\
    W_{1}=& \{(\frac{1}{2}(d-1),(1+d-2\beta_{2})\frac{\sqrt{3}}{2})~|~ 0\leq \beta_{2}\leq d\}, \\
    W_{2}=&\{(\frac{1}{2}(d-1),(-d-1)\frac{\sqrt{3}}{2})\}.
    \end{align*}

We can see that the points in the set $W_{0}$ 
lie on a straight line, and so do the points in  $W_{1}\cup W_{2}$. 

Since $d\geq 2$, we have  $1+\frac{1}{2}d>\frac{1}{2}d-\frac{1}{2}\geq \frac{1}{2}$. Therefore, the convex hull of the set $W$ does not contain the origin $(0,0)$.

 Hence, the foliation $X_{ddd}$ is unstable.
\end{proof}

\begin{cor}
    The foliation $X_{ddd}$ has no degenerated singularities, that is, for every  $p\in Sing(X_{ddd})$, the Milnor number satisfies $\mu_{p}(X_{ddd})> 1$.
\end{cor}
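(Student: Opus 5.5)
The plan is to treat $[1:0:0]$ separately from the other singular points, and to decide degeneracy at each singular point from the linear part of the foliation there. The relevant facts are that $\mu_p(\F_d)=I_0(f,g)$ and that $\mu_p>1$ exactly when the linear part of $\chi_d$ at $p$ is degenerate. Theorem \ref{ddd} only gives instability. The structural feature I would exploit instead is that no component of $X_{ddd}$ depends on $x$.

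\emph{Step 1: the point $p_0=[1:0:0]$.} Here $X_{ddd}(p_0)=0$, so $p_0$ is singular with $\alpha=0$. In the affine chart $x=1$ the local field is $(Q(y,z)-yP(y,z),\; c_dy^d-zP(y,z))$. This has no terms of degree $<d$, so $m_{p_0}\ge d\ge2$ and $\mu_{p_0}\ge d^2>1$.

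\emph{Step 2: the other singular points.} Let $q=[x_0:y_0:z_0]$ with $(y_0,z_0)\neq(0,0)$ be singular, so $X(q)=\alpha q$. Since $X$ does not depend on $x$, $[y_0:z_0]$ must be a root of the binary form $zQ(y,z)-c_dy^{d+1}$, which has $d+1$ roots on $\P^1$. Write $DX(q)$ for the Jacobian at $q$ and $B$ for the $2\times2$ Jacobian of $(Q,c_dy^d)$ in $(y,z)$. Then:
\begin{itemize}
\item $DX(q)$ has first column zero, so $e_1$ is an eigenvector with eigenvalue $0$.
\item By Euler's formula, $DX(q)\,q=d\alpha\, q$.
\item The eigenvalues of $DX(q)$ are therefore $0$ and the two eigenvalues $\beta_1,\beta_2$ of $B$, one of which equals $d\alpha$.
\end{itemize}
The linear part of the foliation at $q$ acts on $\C^3/\langle q\rangle$ as $DX(q)-\alpha\,\mathrm{Id}$. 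Its eigenvalues are $-\alpha$ and $\beta_j-\alpha$, where $\beta_j$ is the eigenvalue of $B$ different from $d\alpha$. So $\mu_q>1$ holds exactly when $\alpha=0$ or $\beta_j=\alpha$.

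\emph{Step 3: forcing degeneracy.} I would try first to prove $\alpha=0$ at each such $q$. From $Q(y_0,z_0)=\alpha y_0$ and $c_dy_0^d=\alpha z_0$, this amounts to showing that the roots of $zQ-c_dy^{d+1}$ are simultaneous zeros of $Q$ and $y^d$. Note that $\alpha=0$ automatically at roots where $y_0=0$ and $Q(y_0,z_0)=0$. Failing that, I would test $\beta_j=\alpha$ via $\operatorname{tr}B=(d+1)\alpha$, using $\det B$ and homogeneity on the root locus.

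\emph{Main obstacle.} Step 3 is where the argument is likely to break. For generic $P,Q,c_d$ the $d+1$ roots are simple, $\alpha\neq0$, and $\beta_j\neq\alpha$, so those singular points look nondegenerate. In that case the count $\mu_{p_0}=d^2$ and Jouanolou's formula $\sum_p\mu_p=d^2+d+1$ force $\mu_q=1$ at each of the $d+1$ other points. So Step 3 can only succeed under extra hypotheses on $P,Q,c_d$. I would therefore carry out Steps 1 and 2 in full, since they hold in general, and then check explicitly whether the $\alpha$-condition holds for the intended coefficients. If it does not, the corollary as stated cannot be derived from Theorem \ref{ddd} alone.
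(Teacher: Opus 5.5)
Your Steps 1 and 2 are correct, and the obstacle you flag in Step 3 is real. The corollary, read as stated, is false, so no argument can complete Step 3. You should commit to this and replace the genericity heuristic with an explicit counterexample. As written, the heuristic also uses $\mu_{p_0}=d^2$, while Step 1 only gives $\mu_{p_0}\ge d^2$; equality holds when $c_d\neq 0$ and $y\nmid Q$, because the tangent cones $Q=0$ and $y^d=0$ then share no line.

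\textbf{Counterexample.} Take $P=0$, $Q=z^d$, $c_d=1$, i.e.\ $X=z^d\frac{\partial}{\partial y}+y^d\frac{\partial}{\partial z}$. Solving $X(q)=\alpha q$ gives $q=[1:0:0]$ for $\alpha=0$. For $\alpha\neq 0$ it gives the $d+1$ points $[0:1:\zeta]$ with $\zeta^{d+1}=1$.
\begin{itemize}
\item At $[1:0:0]$ the local field is $(z^d,y^d)$, so $\mu=I_0(z^d,y^d)=d^2$.
\item In the chart $y=1$, with $u=x/y$ and $v=z/y$, the field is $-uv^d\frac{\partial}{\partial u}+(1-v^{d+1})\frac{\partial}{\partial v}$. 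Its Jacobian at $(0,\zeta)$ is diagonal with determinant $(d+1)\zeta^{2d}\neq 0$, so each $[0:1:\zeta]$ has Milnor number $1$.
\end{itemize}
The total $d^2+(d+1)$ agrees with Jouanolou. In your Step 2 notation, $\alpha=\zeta^d$ and $\beta_j-\alpha=-(d+1)\zeta^d$, and neither involves $P$, so any $P$ gives a counterexample.

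The paper also contradicts itself here. $X_{dd0}$ is $X_{ddd}$ with $c_d=0$, and case 2 of Theorem \ref{dd0} (when $z\nmid Q$) exhibits a singular point of Milnor number $1$.

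The paper gives no proof of this corollary. It is placed right after Theorem \ref{ddd}, so it is presumably meant to follow from instability via the Gómez-Mont--Kempf result quoted in the introduction: if all singular points are distinct, i.e.\ all have Milnor number $1$, the foliation is stable. This is the link between Theorem \ref{ddd} and Milnor numbers that you set aside. Its contrapositive, however, only gives the existential statement that \emph{some} $p\in Sing(X_{ddd})$ has $\mu_p>1$. The quantifier ``for every'' is therefore unjustified. The phrase ``no degenerated singularities'' literally asserts the opposite (all $\mu_p=1$), which is also false since $\mu_{[1:0:0]}\ge d^2$.

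Under the existential reading, your Step 1 is already a complete proof, and it is sharper than the GIT route. It needs no stability theory, it identifies the degenerate point as $[1:0:0]$, and it gives $m_{[1:0:0]}\ge d$ and $\mu_{[1:0:0]}\ge d^2$.
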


For particular cases of $X_{ddd}$ we have the following  result with respect to their singularities. 

 \begin{thm}\label{dd0}
 If $X_{dd0}=\begin{pmatrix}
    P(y,z)_{d}\\
    Q(y,z)_{d}\\
    0  \\
\end{pmatrix}\in \F(d;2)$  whit isolated singularities, then we have one of the following conditions:
\begin{enumerate}
    \item the singular set of $X_{dd0}$ is a unique singular point or,
    \item the singular set of $X_{dd0}$  consists of two singular points which one it is of Milnor number $1$ and the other it has  Milnor number $d^{2}+d$. 
\end{enumerate}
\end{thm}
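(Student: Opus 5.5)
The plan is to locate the singular set directly from the defining equations $X(p)=\alpha p$, show that it has at most two points, and then compute the Milnor number of the second point when it exists. With $R\equiv 0$, a point $p=[x:y:z]$ is singular if and only if
\[
P(y,z)=\alpha x,\qquad Q(y,z)=\alpha y,\qquad 0=\alpha z
\]
for some $\alpha\in\C$. I will split according to whether $z\neq 0$ or $z=0$.

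\emph{Case $z\neq 0$.} Then $\alpha=0$, so $P(y,z)=Q(y,z)=0$. If $[y_0:z_0]$ with $z_0\neq 0$ were a common zero, every point $[x:y_0:z_0]$, $x\in\C$, would be singular. That is a whole line, contradicting the hypothesis of isolated singularities. So no singular point has $z\neq 0$.

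\emph{Case $z=0$.} Here $P(y,0)=a_d y^d$ and $Q(y,0)=b_d y^d$.
\begin{itemize}
\item If $y=0$, the point $[1:0:0]$ satisfies the equations with $\alpha=0$, so it is always singular.
\item If $y\neq 0$, normalise $y=1$. The equations become $b_d=\alpha$ and $a_d=\alpha x$.
\begin{itemize}
\item If $b_d\neq 0$, we get exactly one further singular point $q=[a_d/b_d:1:0]$.
\item If $b_d=0$, then $a_d=0$ as well. Every point $[x:1:0]$ is then singular, and moreover $z$ divides both $P$ and $Q$. This contradicts isolatedness (and the no-common-factor convention), so this subcase cannot occur.
\end{itemize}
\end{itemize}
Hence $Sing(X_{dd0})$ is either $\{[1:0:0]\}$ (this happens only when $b_d=0$ is excluded, i.e.\ the extra point simply fails to exist), or $\{[1:0:0],q\}$ when $b_d\neq 0$.

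\emph{Milnor numbers in the two-point case.} I work in the affine chart $y=1$ with coordinates $(x,z)$. There the foliation is represented by the vector field
\[
\dot x=P(1,z)-xQ(1,z),\qquad \dot z=-zQ(1,z),
\]
obtained by subtracting $x\,Q$ and $z\,Q$ from the first and third components, i.e.\ using the radial equivalence. At $q=(a_d/b_d,0)$ its Jacobian matrix is upper triangular, with both diagonal entries equal to $-Q(1,0)=-b_d$. The determinant is therefore $b_d^2\neq 0$, so $q$ is nondegenerate and $\mu_q(X_{dd0})=1$. Jouanolou's formula $\sum_p\mu_p=d^2+d+1$ then forces $\mu_{[1:0:0]}(X_{dd0})=d^2+d$.

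The main obstacle I expect is the bookkeeping in the case $z=0$, $b_d=0$. One must check carefully that isolatedness really excludes it, and not just that it produces a degenerate point. It is also worth confirming that the chart computation is legitimate, i.e.\ that the radial equivalence does not change the local Milnor number. That is standard, and I would simply cite the invariance used in the paper's definition of $\mu_p$.
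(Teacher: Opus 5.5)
Your overall route is sound and differs from the paper's, but the subcase $z=0$, $y\neq 0$, $b_d=0$ is handled incorrectly.

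\textbf{The error.} From $b_d=\alpha$ you get $\alpha=0$. Then $a_d=\alpha x$ becomes the \emph{condition} $a_d=0$; it is not a consequence. If $a_d\neq 0$ (for instance $P=y^d$, $Q=z^d$), the system has no solution on the line $z=0$ other than $[1:0:0]$, so $Sing(X_{dd0})=\{[1:0:0]\}$. This is exactly alternative (1). Only the case $a_d=b_d=0$ is excluded, because then $z$ divides both $P$ and $Q$.

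As written, ``this subcase cannot occur'' would force $b_d\neq 0$ for every admissible $X_{dd0}$. That would mean two singular points always, which is false and contradicts your own summary sentence. Your closing remark that isolatedness must exclude $b_d=0$ has the same problem.

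The fix is one line: $Sing(X_{dd0})=\{[1:0:0]\}$ when $b_d=0$ (and then necessarily $a_d\neq 0$), and $Sing(X_{dd0})=\{[1:0:0],q\}$ with $q=[a_d/b_d:1:0]$ when $b_d\neq 0$. Everything else is correct: the chart $y=1$, the triangular Jacobian with determinant $b_d^2$, and the appeal to Jouanolou.

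\textbf{Comparison with the paper.} The paper never locates the second point. It works only at $[1:0:0]$, in the chart $x=1$ with $f=Q-yP$ and $g=-zP$. It computes $\mu_{[1:0:0]}=I_0(f,z)+I_0(P,Q)$. Here $I_0(P,Q)=d^2$ for coprime binary forms. Also $I_0(f,z)$ is the order in $y$ of $b_dy^d-a_dy^{d+1}$, which is $d$ if $z\nmid Q$ and $d+1$ if $z\mid Q$. Jouanolou's total $d^2+d+1$ then leaves a deficit of $0$ or $1$, which gives the dichotomy.

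You run Jouanolou in the opposite direction. You solve $X(p)=\alpha p$ to find the singular set, show the possible extra point is nondegenerate, and deduce $\mu_{[1:0:0]}=d^2+d$.

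Your route is more elementary, since it needs no intersection multiplicities or B\'ezout for binary forms, and it exhibits the second point explicitly. The paper's route computes the large Milnor number directly, including the value $d^2+d+1$ in the one-point case; you also get that value immediately from Jouanolou. The two criteria agree, since $b_d\neq 0$ is exactly $z\nmid Q$.
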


\begin{proof}
    Since the point $p=[1:0:0]\in Sing(X_{dd0})$, consider the local representation at the point $p$ given by the polynomials: $f=Q(y,z)-yP(y,z)$ and $g=-zP(y,z)$. The intersection index of $f$ and $g$ in the point $0=(0,0)$ is \[I_{0}(f,g)=I_{0}(f,z)+I_{0}(Q,P).\]

    If $Sing(X_{dd0})$ has isolated singularities, then $P$ and $Q$ have no common factors, which implies that $I_{0}(P,Q)=d^{2}$. Moreover, 

  \begin{equation}
  I_{0}(f,z)=\left\{
   \begin{array}{lll}
     1+d & \mbox{ if }  z|Q,\\
     d & \mbox{ if } z\nmid Q\\
    \end{array}
  \right.
\end{equation}

Thus, 

\begin{equation}
  I_{0}(f,g)=\left\{
   \begin{array}{lll}
     1+d+d^2 & \mbox{ if }  z|Q,\\
     d+d^2 & \mbox{ if } z\nmid Q.\\
    \end{array}
  \right.
\end{equation}

By Jouanolou (\cite{J06}), $X_{dd0}$ has a unique singular point or has two points.
\end{proof}

\begin{thm}
    Let $X_{d}=\begin{pmatrix}
    P(y,z)_{d}\\
    Q(y,z)_{d}\\
    y^{d}\\
\end{pmatrix}\in \F(d;2)$ be a foliation. If $Q(y,z)_{d}=y^{k}Q_{d-k}$ for $1\leq k\leq d$, $y\nmid Q_{d-k}$ and $(P,Q)=1$ then $X_{d}$ has a singular point with Milnor number $d^{2}+k$ and algebraic multiplicity $d$.
\end{thm}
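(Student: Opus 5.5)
The plan is to work at the point $p=[1:0:0]$. Since $P,Q$ depend only on $(y,z)$ and the third component is $y^{d}$, we have $X(p)=(P(0,0),Q(0,0),0)=0$, so $p\in Sing(X_{d})$. As in the proof of Theorem \ref{dd0}, I would dehomogenize at $x=1$. In the chart $(y,z)$ the foliation is given by the local field with components
\[
A=Q(y,z)-yP(y,z),\qquad B=y^{d}-zP(y,z).
\]
The algebraic multiplicity is immediate. $Q$ and $y^{d}$ are homogeneous of degree $d$, while $yP$ and $zP$ have degree $d+1$. Moreover $Q\neq 0$, since otherwise $(P,Q)=P\neq 1$. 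Hence both components have order exactly $d$, and $m_{p}(X_{d})=d$.

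For the Milnor number I would compute $I_{0}(A,B)$ using the standard properties of the intersection index: additivity under products, invariance $I_0(F,G)=I_0(F,G+HF)$, and the fact that two curves without common tangents meet with index equal to the product of their multiplicities. First note that $(P,Q)=1$ and $y\mid Q$ force $y\nmid P$, so $P(0,z)=a_{0}z^{d}$ with $a_{0}\neq 0$. Writing $Q=y^{k}Q_{d-k}$ with $k\geq 1$, we get $A=y\,h$ with
\[
h=y^{k-1}Q_{d-k}-P .
\]
Then
\[
I_{0}(A,B)=I_{0}(y,B)+I_{0}(h,B),\qquad I_{0}(y,B)=I_{0}(y,-a_{0}z^{d+1})=d+1 .
\]

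The main step is the computation of $I_{0}(h,B)$. My plan is to replace $P$ by $y^{k-1}Q_{d-k}$ modulo $h$, which gives
\[
I_{0}(h,B)=I_{0}\big(h,\;y^{k-1}(y^{d-k+1}-zQ_{d-k})\big)=(k-1)\,I_{0}(h,y)+I_{0}(h,G),\qquad G=y^{d-k+1}-zQ_{d-k}.
\]
For the first term, $h(0,z)=-a_{0}z^{d}$, so $I_{0}(h,y)=d$.

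For the second term, $G$ is homogeneous of degree $d-k+1$, and the lowest-order part of $h$ is $y^{k-1}Q_{d-k}$, of degree $d-1$. I would check that these two tangent cones share no linear factor:
\begin{itemize}
\item $y\nmid G$, because $G(0,z)=-zQ_{d-k}(0,z)\neq 0$, using $y\nmid Q_{d-k}$;
\item any common factor of $G$ and $Q_{d-k}$ would divide $y^{d-k+1}$, and hence would be $y$, which is excluded.
\end{itemize}
Transversality then gives $I_{0}(h,G)=(d-1)(d-k+1)$.

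Summing the three contributions,
\[
(d+1)+(k-1)d+(d-1)(d-k+1)=d^{2}+k,
\]
which is the claimed Milnor number. By Jouanolou's formula, the remaining singularities then account for Milnor number $d+1-k\geq 1$ in total.

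The delicate point is the tangency between $h$ and $B$ along $y=0$ when $k\geq 2$. The reduction modulo $h$ is designed to separate exactly the $y^{k-1}$ factor, leaving a transversal pair. I expect the bookkeeping there to need care, in particular the edge cases $k=1$, where the factor $y^{k-1}$ disappears, and $k=d$, where $Q_{0}$ is a nonzero constant. Both appear to fit the same formula.
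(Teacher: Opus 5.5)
Your proof is correct, but it reaches $d^{2}+k$ by a different decomposition than the paper. In the paper's notation, $f=Q-yP$ and $g=y^{d}-zP$ are your $A$ and $B$. The paper never factors $f$. Instead it forms the combination $-y^{d-k}f+Q_{d-k}g=P\,(y^{d+1-k}-zQ_{d-k})$ and writes
$I_{0}(f,g)=I_{0}(P,f)+I_{0}(G,f)-I_{0}(Q_{d-k},f)$,
where $G=y^{d+1-k}-zQ_{d-k}$ is exactly your $G$. It then evaluates the three terms as $d^{2}$, $d(d+1-k)$ and $(d-k)(d+1)$.

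You instead split off the factor $y$ of $f$ (the invariant line $y=0$) and obtain $I_{0}(y,g)=d+1$. You then use $g=y^{k-1}G+zh$ to reduce modulo $h$, which gives $(k-1)d+(d-1)(d-k+1)$.

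The key input is the same transversality check in both arguments. The lines of $G$ avoid the tangent cone $y^{k}Q_{d-k}$ of $f$ (for the paper), respectively $y^{k-1}Q_{d-k}$ of $h$ (for you), for the same reason.

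The paper's identity is a one-line trick that formally also covers $k=0$. However, it subtracts an intersection number, so it needs $(P,Q)=1$ in full: once for $I_{0}(P,f)=I_{0}(P,Q)=d^{2}$, and once to make $I_{0}(Q_{d-k},f)$ finite.

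Your computation is purely additive. Apart from $Q\neq 0$, which already follows from $y\nmid Q_{d-k}$, it uses the coprimality hypothesis only through $y\nmid P$. So it proves the local statement under weaker assumptions, with no finiteness bookkeeping.
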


\begin{proof}
    Since $p=[1:0:0]\in Sing(X_{d})$, consider the local representation of $X_{d}$ at $p$ given by $f=Q(y,z)_{d}-yP(y,z)_{d}$ and $g=y^{d}-zP(y,z)_{d}$. Observe that $m_{p}(X_{d}))=d$. Thus, the intersection index of $f$ and $g$ at the point $(0,0)$ is 
    \begin{equation}\label{k}
  I_{0}(f,g)=\left\{
   \begin{array}{lll}
     d^{2} & \mbox{ if }  k=0,\\
     d^{2}+k & \mbox{otherwise}.
    \end{array}
  \right.
\end{equation}

Indeed, we assume that $Q_{d}=y^{k}Q_{d-k}$ and $y\nmid Q_{d-k}$ for some $0\leq k \leq d$. 
Define the following  polynomials \[A:=-y^{d-k}f+Q_{d-k}g=-y^{d-k}y^{k}Q_{d-k}+y^{d-k}yP+Q_{d-k}y^{d}-zQ_{d-k}P=P(y^{d+1-k}-zQ_{d-k}).\] and \[B:=y^{d+1-k}-zQ_{d-k}.\]

Let us observe that $A=PB$. Moreover, $I_{0}(f,g)=I_{0}(A,f)-I_{0}(Q_{d-k},f)$.
Let's calculate each intersection index.

Since $I_{0}(A,f)=I_{0}(P,f)+I_{0}(B,f)$. We can see the following:
\begin{enumerate}
    \item $I_{0}(P,f)=I_{0}(P,Q)=d^{2}$, because $(P,Q)=1$.
    \item $B$ is a homogeneous polynomial of degree $d+1-k$ that $y, Q_{d-k}\nmid B$ then \[I_{0}(B,f)=(d+1-k)(d)=d^{2}+d-kd.\]
    \end{enumerate}\label{A2}

On the other hand, 
\begin{enumerate}
    \item[(3)] $I_{0}(Q_{d-k},f)=I_{0}(Q_{d-k},y^{k}Q_{d-k}-yP)=I_{0}(Q_{d-k},y)+I_{0}(Q_{d-k},P)=(d-k)+(d-k)d=d^{2}+d-k-dk$ because $(P,Q)=1$ and $y\nmid Q_{d-k}$.
\end{enumerate}\label{A1}

By the equations $(1)-(3)$ we have:
\begin{align*}
  I_{0}(f,g)=&I_{0}(P,f)+I_{0}(B,f)-I_{0}(Q_{d-k},f)\\
  =&d^{2}+(d^{2}+d-kd)-(d^{2}+d-k-dk)\\
  =&d^{2}+k.  
\end{align*}

\end{proof}

\begin{thm}\label{multiplicidad-Fija}
Let $X=\begin{pmatrix}
    P(y,z)_{d}\\
    Q(x,y,z)_{d}\\
    0\\
\end{pmatrix}\in \F(d;2)$ be a foliation. $X$ has a singular point of multiplicity $m$ and Milnor number between $m(d+1)$ and $d^{2}+d+1$. 
\end{thm}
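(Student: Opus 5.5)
The plan is to work at the point $p=[1:0:0]$ and compute everything in the affine chart $x=1$, in the same way as in the proofs of Theorems \ref{dd0} and the preceding result on $X_d$.

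\textbf{Setting up the local field.} Dehomogenizing $X$ at $p$ (subtracting the radial part) gives the local components
\[
f=Q(1,y,z)-yP(y,z),\qquad g=-zP(y,z).
\]
The point $p$ is singular exactly when $Q(1,0,0)=0$, i.e. when $Q$ has no $x^{d}$ term. I will assume this, together with isolated singularities, since otherwise neither the multiplicity nor the Milnor number at $p$ is defined. If one prefers not to assume it, one first moves a singular point to $p$; the preceding Proposition (\cite{Ramon}) shows that singularity, multiplicity and Milnor number are preserved. The multiplicity can then be read off directly. Since $P$ is homogeneous of degree $d$ in $(y,z)$, the terms $yP$ and $zP$ have order $d+1$. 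Hence
\[
m=m_{p}(X)=\operatorname{ord}_{0}Q(1,y,z)\le d,
\]
which is the lowest total $(y,z)$-degree occurring in $Q$, unless $Q(1,y,z)$ vanishes to order $\ge d+1$, in which case $m=d+1$.

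\textbf{Lower bound.} I split the intersection index by additivity:
\[
\mu_{p}(X)=I_{0}(f,g)=I_{0}(f,z)+I_{0}(f,P).
\]
\begin{itemize}
\item For the first term, $I_{0}(f,z)=\operatorname{ord}_{y}Q(1,y,0)$, and this is at least $\operatorname{ord}_{0}f\ge m$.
\item For the second term, factor $P=\prod_{i=1}^{d}\ell_{i}$ into linear forms in $(y,z)$, all passing through the origin. Then $I_{0}(f,P)=\sum_{i}I_{0}(f,\ell_{i})$.
\item For each line, the restriction of $f$ to $\ell_{i}$ vanishes to order at least $\operatorname{ord}_{0}f\ge m$, so $I_{0}(f,\ell_{i})\ge m$. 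The isolatedness assumption guarantees that none of these indices is infinite.
\end{itemize}
Summing gives $\mu_{p}(X)\ge m+dm=m(d+1)$.

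\textbf{Upper bound.} By Jouanolou's formula, $\sum_{q\in Sing(X)}\mu_{q}(X)=d^{2}+d+1$. Each summand is positive, so $\mu_{p}(X)\le d^{2}+d+1$.

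\textbf{Main obstacle.} The delicate step is the lower bound $I_{0}(f,\ell_{i})\ge m$ when $\ell_{i}$ is a component of $f$, or shares a component with it; this would make the intersection infinite. I expect to rule this out using the isolated-singularity hypothesis. A common component of $f$ and $zP$ would produce a curve of singular points, so under that hypothesis the indices are finite. Beyond this, the proof is a bookkeeping exercise of the same kind as in Theorem \ref{dd0}.
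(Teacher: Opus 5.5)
Your argument is correct and is essentially the paper's proof. Both use the same splitting $I_{0}(f,g)=I_{0}(f,z)+I_{0}(f,P)$ with $I_{0}(f,P)=I_{0}(Q,P)$, and the same factorization of $P$ into $d$ lines through the origin, each contributing at least $m$; the only difference is that you take the upper bound from Jouanolou's global count, while the paper bounds the two terms separately by $d+1$ and $d^{2}$.

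Drop the aside about first moving an arbitrary singular point to $[1:0:0]$. A coordinate change destroys the shape $(P(y,z),Q,0)$ on which the computation rests. Moreover, when $Q(1,0,0)\neq 0$, a generic field of this shape has $d^{2}+d+1$ distinct, hence nondegenerate, singular points. So the hypothesis that $[1:0:0]$ itself is singular, which the paper also imposes, cannot be removed.
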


\begin{proof}
Let $Q(x,y,z)=Q^{d}+xQ^{d-1}+\ldots+x^{d-1}Q^{1}+x^{d}Q^{0}$ where $Q^{i}$ are homogeneous polynomials of degree $i$ in the polynomial ring $\C[y,z]$. We denote by $Q=Q(1,y,z)$. If we assume that the point $p=[1:0:0]$ is a singular point of $X$ then $Q^{0}\equiv 0$. Since the local representation of $X$ around of point $p$ is given by the polynomials $f=Q-yP; g=-zP$. Then \[I_{0}(f,g)=I_{0}(f,z)+I_{0}(P,Q).\]

If we write $f$ as its homogeneous parts, $f=f^{1}+f^{2}+\ldots + f^{d+1}$. Consider $m=min\{i\in [d]~|~f^{i}\neq 0\}$. Since $f^{i}=Q^{i}$ for $1\leq i\leq d$ and $f^{d+1}=-yP$ then $Q=Q^{d}+\ldots +Q^{m}$. Therefore, the following holds.
  \begin{equation}\label{ml}
  I_{0}(f,z)=\left\{
   \begin{array}{lll}
        m   & \mbox{ if }  z\nmid Q^{m}\\
        l+k & \mbox{ if }  z |(f^{m}+f^{m+1}+\ldots +f^{l}) \mbox{ and } z\nmid f^{l+k};\\
            & \mbox{ where } m\leq l < l+k \leq d+1, \mbox{ and } f^{l+1}=\ldots=f^{l+k-1}=0.\\
    \infty  & \mbox{ if } l=d+1.  
    \end{array}
  \right.
\end{equation}

If $m=d$ and $l+k=d+1$ then $I_{0}(f,z)=d+1$. Since $(P,Q)=1$ then $I_{0}(P,Q)=d^{2}$, thus $I_{0}(f,g)=d^{2}+d+1$. 

Since $P$ is a homogeneous polynomial of degree $d$ in the polynomial ring $\C[y,z]$, then $P$ is a product of $d$ lines $p_{i}\in \C[y,z]_{1}$;  \[P=\prod_{i=1}^{d} p_{i}.\] Thus, $I_{0}(P,Q)=\sum_{i=1}^{d}I_{0}(p_{i},Q)$. Observe the following.
  \begin{equation}\label{piQ}
  I_{0}(p_{i},Q)=\left\{
   \begin{array}{lll}
        m   & \mbox{ if }  p_{i}\nmid Q^{m}\\
      s_{i}+r_{i}   & \mbox{ if }  p_{i}|(Q^{m}+Q^{m+1}+\ldots + Q^{s_{i}}) \mbox{ and } p_{i}\nmid Q^{s_{i}+r_{i}}\neq 0;\\
            & \mbox{ where } m\leq s_{i} < s_{i}+r_{i} < d, \mbox{ and } Q^{s_{i}+1}=\ldots=Q^{s_{i}+r_{i}-1}=0.\\
    \infty  & \mbox{ if } s_{i}=d.  
    \end{array}
  \right.
\end{equation}

If we assume that $s_{i}\neq d$ for all $i\in [d]$ then, 
\[md\leq I_{0}(P,Q)\leq d^{2}.\]

With the equations (\ref{ml}) and (\ref{piQ}), the Milnor number at the point $p$ is $m(d+1)\leq \mu_{p}(X)\leq d^{2}+d+1$. Since  $P$ is homogeneous polynomial of degree $d$, the multiplicity of $p$ in $X$ is give by the $Q$ polynomial.

\end{proof}

\begin{cor}\label{M1}
Let $X=\begin{pmatrix} P(y,z)_{d}\\ Q(x,y,z)_{d}\\0 \end{pmatrix}\in \F_{d}$ be a foliations of degree $d$.  If $p=[1:0:0]\in Sing(X)$ with multiplicity $m_{p}(X)=1$ then the Milnor number of $p$ in $X$ is between $d+1$ and $d^{2}+d+1$.  
\end{cor}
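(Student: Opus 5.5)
The plan is to specialize Theorem~\ref{multiplicidad-Fija} to the case $m=1$. We keep its notation: write $Q(x,y,z)=Q^{d}+xQ^{d-1}+\cdots+x^{d-1}Q^{1}+x^{d}Q^{0}$ with $Q^{i}\in\C[y,z]_{i}$. The hypothesis $p=[1:0:0]\in Sing(X)$ forces $Q^{0}\equiv 0$.

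In the chart $x=1$ the local representation is $f=Q-yP$, $g=-zP$, where $Q=Q(1,y,z)$. Since $P$ has degree $d\geq 2$, the term $-yP$ has order $d+1\geq 3$, and $g$ also has order $d+1$. Hence the linear parts of $f$ and $g$ are $Q^{1}$ and $0$. So $m_{p}(X)=1$ means exactly $Q^{1}\neq 0$, and the integer $m=\min\{i : f^{i}\neq 0\}$ of Theorem~\ref{multiplicidad-Fija} equals $1$.

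Next we use the decomposition $\mu_{p}(X)=I_{0}(f,g)=I_{0}(f,z)+I_{0}(P,Q)$ from that proof.

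For the lower bound, $I_{0}(f,z)\geq \mathrm{ord}_{0}(f)=1$. Writing $P=\prod_{i=1}^{d}p_{i}$ as a product of lines, each $I_{0}(p_{i},Q)\geq 1$, because $Q$ vanishes at the origin. Summing gives $I_{0}(P,Q)\geq d$, and therefore $\mu_{p}(X)\geq d+1=m(d+1)$.

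For the upper bound, the local Milnor number at one point is at most the global sum, so Jouanolou's formula $\sum_{q\in Sing}\mu_{q}=d^{2}+d+1$ gives $\mu_{p}(X)\leq d^{2}+d+1$. This uses the standing assumption that the singularities are isolated, so that every term in that sum is finite. Combining the two bounds proves the corollary.

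The main obstacle is the upper bound if one insists on the local computation in (\ref{ml}) and (\ref{piQ}) instead of Jouanolou. There, $I_{0}(f,z)$ and $I_{0}(p_{i},Q)$ can be large, or even infinite when $z$ or some $p_{i}$ divides $f$ or $Q$, and those cases would have to be excluded by hand. Invoking Jouanolou sidesteps all of this, provided isolatedness holds. Isolatedness also guarantees $(P,Q)=1$, which keeps the intersection indices finite.
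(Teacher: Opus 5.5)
Your proposal is correct and is essentially the paper's route: the corollary is Theorem~\ref{multiplicidad-Fija} with $m=1$, and your lower bound $\mu_{p}(X)=I_{0}(f,z)+\sum_{i}I_{0}(p_{i},Q)\geq 1+d$ is exactly that theorem's argument. The one difference is the upper bound, where you use Jouanolou's global count $\sum_{q}\mu_{q}=d^{2}+d+1$ in place of the paper's local estimates $I_{0}(f,z)\leq d+1$ and $I_{0}(P,Q)\leq d^{2}$; those follow because the restrictions of $f$ to $z=0$ and of $Q$ to each line $p_{i}=0$ are nonzero polynomials of degree at most $d+1$ and $d$. Both versions need only isolatedness, which here means $(P,Q)=1$, so the local route is less of an obstacle than you suggest, but your argument is equally valid.
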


\begin{cor}
For $d=4$. If $X$ is a foliation as in the corollary (\ref{M1}), then $X$ has at least two singularities. 
\end{cor}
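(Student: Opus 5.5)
We count Milnor numbers against Jouanolou's formula. For $d=4$ it gives
\[\sum_{q\in Sing(X)}\mu_{q}(X)=d^{2}+d+1=21.\]
It therefore suffices to show that the singular point $p=[1:0:0]$ of Corollary \ref{M1} cannot absorb the whole Milnor number, i.e.\ that $\mu_{p}(X)\neq 21$ when $m_{p}(X)=1$. Then some other point of $\P^{2}$ must be singular.

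\textbf{Step 1: the maximal case.} Recall from the proof of Theorem \ref{multiplicidad-Fija} that
\[\mu_{p}(X)=I_{0}(f,g)=I_{0}(f,z)+I_{0}(P,Q),\qquad f=Q-yP,\ g=-zP,\]
where $Q=Q(1,y,z)=Q^{4}+\dots+Q^{1}$ with $Q^{1}\neq 0$, since $m_{p}(X)=1$. Both summands are bounded, so the value $21$ forces both bounds to be attained:
\[I_{0}(f,z)=d+1=5,\qquad I_{0}(P,Q)=d^{2}=16.\]

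\textbf{Step 2: the condition on $I_{0}(f,z)$.} We have $I_{0}(f,z)=\operatorname{ord}_{y}f(y,0)$. Since $P$ is homogeneous of degree $4$ in $(y,z)$, we get $yP(y,0)=a_{4}y^{5}$. Hence $I_{0}(f,z)=5$ forces $z\mid Q^{1},\dots,Q^{4}$ and $a_{4}\neq 0$.

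\textbf{Step 3: the condition on $I_{0}(P,Q)$.} Factor $P$ into lines $p_{i}$ and use $I_{0}(P,Q)=\sum_{i}I_{0}(p_{i},Q)$. No $p_{i}$ equals $z$, because $a_{4}\neq 0$. For each $i$, $I_{0}(p_{i},Q)$ is the order of vanishing of $Q$ restricted to the line $p_{i}$. The restriction $Q|_{p_{i}}$ is a polynomial of degree at most $4$ in one variable with no constant term, so its order is at most $4$. Attaining $16$ therefore requires, for every $i$, that $Q|_{p_{i}}$ be a monomial of degree exactly $4$. Equivalently, $p_{i}$ divides $Q^{1},Q^{2},Q^{3}$ while $p_{i}\nmid Q^{4}$.

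\textbf{Step 4: the contradiction.} Step 2 gives $z\mid Q^{1}$, so $Q^{1}=cz$ with $c\neq 0$. Step 3 requires each $p_{i}\neq z$ to divide $Q^{1}=cz$, which is impossible. Hence $\mu_{p}(X)<21$, and by Jouanolou's formula there is at least one further singular point.

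\textbf{Main obstacle.} The delicate part is justifying Step 3 rigorously: that $I_{0}(P,Q)=d^{2}$ forces each local intersection to be exactly $d$, with no contribution at infinity. I would argue through the homogenized count. In the affine chart $x=1$, $P$ and $Q$ have degree at most $4$, so Bézout bounds their total number of intersections by $16$. The local contribution at $0$ reaches $16$ only if all intersections concentrate at $0$, and this is what yields the monomial condition on each line $p_{i}$. If instead the Milnor number at $p$ could still equal $21$ in some degenerate configuration (for example, if $(P,Q)\neq 1$ is allowed), I would restrict to the coprime case implicit in Corollary \ref{M1}.
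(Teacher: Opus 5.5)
Your proof is correct and is essentially the paper's argument. Jouanolou's formula forces $I_0(f,z)=5$ and $I_0(P,Q)=16$; then $z\mid Q$ together with $m_p(X)=1$ (so $Q^1=cz$ with $c\neq 0$) gives $I_0(p_i,Q)=1$ for every $p_i\neq z$, so $16$ can only be reached with all $p_i=z$. You exclude that via $a_4\neq 0$, and the paper excludes it because $z$ would then be a common factor of $P$ and $Q$.

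Your write-up makes explicit where $m_p(X)=1$ enters, which the paper leaves implicit. The hypothesis is genuinely needed: e.g.\ $X=(y^4,\,x^2yz+z^4,\,0)$ has $m_p(X)=2$ and $\mu_p(X)=5+4\cdot 4=21$, so $p$ is its only singular point. Your Step 3 also already settles your ``main obstacle'': by $(P,Q)=1$ each $Q|_{p_i}$ is a nonzero polynomial of degree at most $4$, so no B\'ezout argument is needed.
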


\begin{proof}
    Let $X=\begin{pmatrix}
        P(y,z)_{4}=\prod_{i=1}^{4}p_{i}\\ Q(x,y,z)_{4}=\sum_{i=1}^{4}x^{i}Q_{4-i}\\0
    \end{pmatrix}\in\F(4;2)$ a foliation where $p_{i}\in\C[y,z]_{1}$ and $Q_{4-i}\in \C[y,z]_{4-i}$ for all $i=1,\ldots 4$. We assume that $\{p=[1:0:0]\}=Sing(X)$, then in local coordinates, we consider $f=\sum_{i=1}^{4}Q_{i}-y*P(y,z)_{4}$ and $g= -z*P(y,z)_{4}$ with $f(0,0)=g(0,0)=0$.  Then $I_{0}(f,g)=21$ and this occurs if and only if $I_{0}(f,z)=5$ and $I_{0}(P,Q)=16$ in inequalities (\ref{ml}) and (\ref{piQ}).
   However, if $I_{0}(f,z)=5$, this implies that $z$ divides to $Q_{1}+Q_{2}+Q_{3}+Q_{4}=Q$. Since  $I_{0}(P,Q)=16$, it follows that $p_{i}=z$ for all $i=1,\ldots,4$.  Therefore, $z$ is a common factor of the foliation $X$, and consequently $X$ does not have isolated singularities. 
\end{proof}

A natural question arising from the foliations in the Lemma \ref{multiplicidad-Fija} concerns their stability. When the algebraic multiplicity is $d$, we have the following result.

\begin{lem}
    Let $X$ be a foliation of degree $d\geq 2$ with a singular point of multiplicity $d$. Then $X$ is a unstable foliation. 
\end{lem}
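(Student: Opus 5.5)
We will reduce to a normal form in which no monomial of the vector field involves $x$, and then exhibit one explicit $1$-PS for which every weight of such a vector field is positive.

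\emph{Step 1: move the point.} Let $p$ be the singular point with $m_p(X)=d$. Since $SL_3(\C)$ acts transitively on $\P^{2}$, choose $g\in SL_3(\C)$ with $g(p)=[1:0:0]$. By the Proposition of \cite{Ramon}, $g\cdot X$ has a singular point at $[1:0:0]$ of algebraic multiplicity $d$. Stability is invariant along orbits. Hence we may assume $p=[1:0:0]$.

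\emph{Step 2: normal form.} Write $X=(P,Q,R)$. Any homogeneous $P$ of degree $d$ decomposes as $P=P'(y,z)+xG(x,y,z)$ with $\deg G=d-1$. Subtracting $G$ times the radial field, which does not change the foliation, replaces $(P,Q,R)$ by $(P'(y,z),\,Q-yG,\,R-zG)$. We may therefore assume that $P=P(y,z)$ does not involve $x$.

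In the chart $x=1$ the local field at $p$ is given, as in the proofs of Theorems \ref{dd0} and \ref{multiplicidad-Fija}, by
\[
f=Q(1,y,z)-yP(y,z),\qquad g=R(1,y,z)-zP(y,z).
\]
The terms $yP$ and $zP$ are homogeneous of degree $d+1$. Hence the parts of $f,g$ of degree $\le d$ are exactly $Q(1,y,z)$ and $R(1,y,z)$. A monomial $x^{d-j}y^{j-i}z^{i}$ contributes local degree $j$. So $m_p(X)=d$ forces every monomial of $Q$ and $R$ with $j<d$ to vanish, i.e.\ $Q=Q(y,z)$ and $R=R(y,z)$. Thus
\[
X=\bigl(P(y,z)_d,\;Q(y,z)_d,\;R(y,z)_d\bigr),
\]
which generalizes the form $X_{ddd}$ of Theorem \ref{ddd}, where $R=c_dy^d$.

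\emph{Step 3: a destabilizing $1$-PS.} Take $\lambda=\lambda_{(2,-1)}$, so $(k_1,k_2,k_3)=(2,-1,-1)$. Every monomial now has the form $y^{a}z^{b}$ with $a+b=d$. By formula (\ref{CondicionPesos}) the weights are:
\begin{itemize}
\item in the $\partial/\partial x$ component: $-k_2a-k_3b+k_1=d+2$;
\item in the $\partial/\partial y$ component: $-k_2(a-1)-k_3b=d-1$;
\item in the $\partial/\partial z$ component: $-k_2a-k_3(b-1)=d-1$.
\end{itemize}
All of these are positive since $d\ge 2$. Hence $\mu(X,\lambda)\ge d-1>0$, and $X$ is unstable by Theorem \ref{definicionEstabilidad}.

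The step needing the most care is Step 2. There one must use the radial-field ambiguity correctly, so that the multiplicity condition translates into the vanishing of every $x$-containing monomial, and one must keep the local-chart sign conventions consistent with the weight formula (\ref{CondicionPesos}). Once the normal form is established, Step 3 is a direct computation.
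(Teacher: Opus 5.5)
Your proof is correct. Its skeleton matches the paper's: move the singular point to $[1:0:0]$, use $m_p(X)=d$ to force a normal form without $x$-dependence, and read off weights from (\ref{CondicionPesos}). The two routes diverge in two places.

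First, the paper fixes the radial ambiguity by taking $i_2=0$. That leaves the representative $(xP_{d-1}+P_d,\,Q_d,\,c\,y^d)$, and hence an extra family of monomials $xy^az^b\,\partial/\partial x$ to handle. You instead take $P$ free of $x$. This is an equally valid $T$-stable complement of the radial multiples, so the weights of your representative are the weights of the point in the quotient, and every monomial lies in $\C[y,z]_d$.

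Second, and more importantly, the paper never fixes a $1$-PS. It rewrites each weight as $k_1A_l+k_3B_l$ and analyzes, family by family, for which ratios $k_3/k_1$ that weight is positive. The intersection of these conditions is left implicit, and no single destabilizing $\lambda$ is ever written down. You simply exhibit $\lambda_{(2,-1)}$, i.e.\ $k_3/k_1=-\tfrac12$, for which all weights are $d+2$ or $d-1$. The paper's extra family would also get weight $d-1$ under this choice, so it works in either normalization.

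This is exactly the $1$-PS the paper uses later in Section \ref{results}, to show that a stable foliation has a singular point of multiplicity less than $\frac{2d+1}{3}$. Your computation is the case $j_1=j_2=d$ of that one.

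Your version gives a short, complete argument. The paper's ratio analysis, if carried through carefully, would instead describe the whole set of diagonal $1$-PS that destabilize such foliations. That is more information about where their weights sit in the weight diagram, at the cost of a longer case analysis.
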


\begin{proof} Let $X=P(x,y,z)\frac{\partial }{\partial x}+Q(x,y,z)\frac{\partial }{\partial y}+R(x,y,z)\frac{\partial }{\partial z}$ be a foliation of degree $d$, that is, $P,Q,R\in \C[x,y,z]_{d}$. If we assume that $p=[1:0:0]\in Sing(X)$ such that $m_{p}(X)=d$. By definition of multiplicity,
\[X=\begin{pmatrix}
    xP(y,z)_{d-1}+P(y,z)_{d}\\
    Q(y,z)_{d}\\
    R(y,z)_{d}
\end{pmatrix}\]
 for some homogeneous polynomials $P(y,z)_{d},P(y,z)_{d-1}, Q(y,z)_{d}, R(y,z)_{d}\in \C[y,z]$ or some them equal to zero polynomial.  

 By the notation of equations  (\ref{FoliacionesMonomios}), we assume that
 \begin{align*}
     P(y,z)_{d-1}&=\sum_{i_{0}=0}^{d-1} a_{i_{0},d-1}y^{d-1-i_{0}}z^{i_{0}},\\
     P(y,z)_{d}&=\sum_{i_{0}=0}^{d} a_{i_{0},d}y^{d-i_{0}}z^{i_{0}},\\
     Q(y,z)_{d}&=\sum_{i_{1}=0}^{d} b_{i_{1},d}y^{d-i_{1}}z^{i_{1}},\\
     R(y,z)_{d}&= c_{0,d}y^{d},
\end{align*}
 with $a_{i_{0},*},b_{i_{1},d},c_{0,d}\in \C$. We consider the equation (\ref{CondicionPesos}) 

\[C^{l,d}_{i_{l},j_{l}}=k_{1}(d-i_{l})+k_{3}(d-2i_{l})+k_{l+1}= k_{1}A_{l}+k_{3}B_{l}\] for $l=0,1,2$.
Then 
 \begin{align*}
 A_{0}=&d+1-i_{0},~ B_{0}=d-2i_{0},\\
 A_{1}=&d-i_{1}-1,~ B_{1}=d-1-2i_{1},\\
 A_{2}=&d-i_{2},~ B_{2}=d-2i_{2}+1.
 \end{align*}
with $0\leq i_{l}\leq d$. These weights correspond to monomial foliations  $X^{l,d}_{i_{l},d}$. 
 To calculated that $C_{i_{l},j_{l}}^{l,d}>0$  we need to consider two cases.

Before reviewing each case, we observe the following inequalities:

\begin{itemize}
    \item   \[A_{0}=d+1-i_{0} >0 \mbox{ for all } 0\leq i_{0}\leq d.\]
    \item  \begin{equation*}
   A_{1}=d-1-i_{1} \mbox{ then } \left\{
   \begin{array}{lll}
        A_{1} < 0   &  \mbox{ if }  d=i_{1}, \\
        A_{1} = 0   &  \mbox{ if } d-1=i_{1}, \\
        A_{1} >0    & \mbox{ if } d-1 > i_{1}.
    \end{array}
  \right.
\end{equation*}
\item \begin{equation*}
  A_{2}=d-i_{2} \mbox{ then } \left\{
   \begin{array}{lll}
        A_{2}= 0   &  \mbox{ if } d=i_{2}, \\
        A_{2}>0    & \mbox{ if }  i_{2}\neq d.
    \end{array}
  \right.
\end{equation*}
\end{itemize}

\subsubsection{Case $l=0$}
\[C_{i_{0},d}^{0,d}=k_{1}A_{0}+k_{3}B_{0}> 0.\]
Since $A_{0}> 0$, we analyze the following subcases;

\begin{enumerate}
    \item $B_{0}<0$, that is, $\frac{d}{2} < i_{0}$. The condition $C_{i_0,d}^{0,d}>0$ is hold if and only if $\frac{k_{3}}{k_{1}}< \frac{A_{0}}{B_{0}} \leq -\frac{1}{2}$. This is true because $\frac{d}{2}< \frac{3d+2}{4}\leq i_{0}$.
    \item $B_{0}>0$, that is, $\frac{d}{2} > i_{0}$. The condition $C_{i_{0},d}^{0,d}>0$ is hold if and only if $\frac{k_{3}}{k_{1}} > -\frac{A_{0}}{B_{0}} \geq -2$. This is true if and only if  $i_{0}\leq \frac{d-1}{3}$, that means when $i_{0}<\frac{d}{2}$.
    \item $B_{0}=0$, if $i_{0}=\frac{d}{2}$, which happens when $d$ is even.  In this case the condition $C_{i_{0},d}^{0,d}>0$ is reduced to $k_{1}A_{0}>0$ which always happens.
    \end{enumerate}

\subsubsection{Case $l=1$}
\[C_{i_{1},d}^{1,d}=k_{1}A_{1}+k_{3}B_{1}> 0.\]
Since \begin{equation*}
   A_{1}=d-1-i_{1} \mbox{ then } \left\{
   \begin{array}{lll}
        A_{1} < 0   &  \mbox{ if }  d=i_{1}, \\
        A_{1} = 0   &  \mbox{ if } d-1=i_{1}, \\
        A_{1} >0    & \mbox{ if }  i_{1}\in [0,d-1)\cap \Z,
    \end{array}
  \right.
\end{equation*}
and 
\begin{equation*}
   B_{1}=d-1-2i_{1} \mbox{ then } \left\{
   \begin{array}{lll}
       B_{1} < 0   &  \mbox{ if }  i_{1} \in (\frac{d-1}{2},d]\cap\Z, \\
        B_{1}= 0   &  \mbox{ if } \frac{d-1}{2}=i_{1} \mbox{ when } d \mbox{ is  impar }, \\
        B_{1}>0    & \mbox{ if }  i_{1}\in[0,\frac{d-1}{2})\cap\Z.
    \end{array}
  \right.
\end{equation*}

In these cases we have the following  subcases:

\renewcommand{\arraystretch}{1.1} 
 \begin{figure}[!h]
\begin{tabular}{|c|c|c|c l|}\hline
$A_{1}$ &  $B_{1}$ & values of $i_{1}$                & $C_{i_{1},d}^{1,d}>0$  & \\\hline
$>0$ &   $<0$    & $[0,\frac{d-1}{2})\cap\Z$         & $\frac{k_{3}}{k_{1}}< \frac{A_{1}}{-B_{1}}\leq -\frac{1}{2}$& Happens if $d\geq 1$.\\
$>0$ &   $=0$    &  $\frac{d-1}{2}$                  & $k_{1}>0$& It is true. \\
$>0$ & $>0$ & $(\frac{d-1}{2}, d-1)\cap\Z$  & $\frac{k_{3}}{k_{1}}> -\frac{A_{1}}{B_{1}}\geq -2$ &  It is true if $d>1$.\\
$=0$     &   $>0$    & $d-1$                             & $k_{3}<0$ & It is true.\\
$<0$     &   $<0$    & $d$ & $\frac{k_{3}}{k_{1}} > -\frac{1}{d+1}\geq -2$ & Is is true. \\ \hline
 \end{tabular}
\end{figure}

\subsubsection{Case $l=2$}

By equivalent of foliations, $i_{2}=0$, then $A_{2}=d$, and $B_{2}=d+1$, thus $C_{0,d}^{2,d}=dk_{1}+(d+1)k_{3}>0$ if only if, $\frac{k_{3}}{k_{1}}>-\frac{d}{d+1}\geq -2$ if and only if $0\leq d+2$.

We consider the equation given by: 

\[C_{d-1,0}=k_{1}(d-1-i_{0})+k_{3}(d-1-2i_{0})\]   whose weights correspond to the monomials of $P(y,z)_{d-1}$ and its analysis is restricted to the case $A_{1},B_{1}$ above. 
\end{proof}
The converse is not true: there exist unstable foliations with multiplicity less than $d$, see \cite{CR16,castorena2024git} for examples.

\begin{thm}
    Let $\F$  be a foliation of degree $d$ with a finite abelian group of automorphisms and a unique singular point. Let $m$ the multiplicity of the singularity, then if one of the following conditions is satisfied then $\F$ is stable:
    \begin{itemize}
        \item $m\leq \frac{d-1}{3}$ or 
        \item $\frac{d-1}{3}< m \leq \frac{d}{2}$ and $\F$ have not invariant lines.
    \end{itemize} 
\end{thm}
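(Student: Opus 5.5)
We argue by contraposition through the Hilbert--Mumford criterion (Theorem \ref{definicionEstabilidad}). Suppose $\F$ is not stable. Then there is a $1$-PS $\lambda$ with $\mu(X,\lambda)\geq 0$. Conjugating by some $g\in SL_{3}(\C)$, we may take $\lambda=\lambda_{(k_{1},k_{2})}$ diagonal with $k_{1}\geq k_{2}\geq k_{3}$, $k_1+k_2+k_3=0$. By the proposition of \cite{Ramon}, this change of coordinates preserves the singular set, the Milnor numbers and the algebraic multiplicity. It also preserves invariant lines and the isomorphism type of the automorphism group.

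The first step is to show that the unique singular point sits at a torus-fixed point, which we take to be $p=[1:0:0]$ (the vertex of largest weight). The $\lambda$-limit $\lim_{t\to 0}\lambda(t)\cdot X$ exists in the closure of the orbit. Since $\mu\geq 0$, every monomial $X^{l,d}_{i_l,j_l}$ appearing in $X$ has $C^{l,d}_{i_l,j_l}\geq 0$. This imposes vanishing conditions on the coefficients near the vertex $[1:0:0]$. Concretely, the monomials with a large power of $x$ have weight $\approx -k_{1}(d-j)<0$ unless the exponents of $y,z$ are large. Hence $[1:0:0]$ is singular, and by uniqueness it is the singular point.

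The finite abelian automorphism hypothesis enters here. It excludes the case $\mu=0$ with a positive-dimensional stabilizer of the limit being forced onto $X$ itself. It also excludes a continuous symmetry $\lambda\subset \mathrm{Aut}(\F)$, since all weights equal to $0$ would make $\lambda$ fix $X$.

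The second step is to translate $C^{l,d}_{i_l,j_l}\geq 0$ into a lower bound on the multiplicity at $p$. Write $X$ in the chart $x=1$. The local components $f,g$ have homogeneous parts of degree $s$ coming from monomials with $x$-exponent $d-s$ (plus the radial correction from $P$). As in the proof of the lemma on multiplicity $d$, we use Remark \ref{K3K1}: any positive weight forces $-2\leq k_3/k_1\leq -\tfrac12$. We then show that every monomial of local degree $s$ has weight at most a linear function of $s$. The worst case is the ratio $k_3/k_1=-\tfrac12$, i.e.\ $\lambda=(2,-1,-1)$, or $k_3/k_1=-2$, i.e.\ $(1,1,-2)$.

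For $\lambda=(2,-1,-1)$ the weight of a monomial of local degree $s$ in the $\partial_y,\partial_z$ components is $-2(d-s)+s-1=3s-2d-1$. So $\mu\geq 0$ forces all local terms to satisfy $s\geq (2d+1)/3$, i.e.\ $m>\frac{d-1}{3}$ after checking the $\partial_x$ component as well. This contradicts the first condition. For general $(k_1,k_2,k_3)$ the bound interpolates, and we must check that the minimum over admissible ratios is still $>\frac{d-1}{3}$.

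The third step handles $\frac{d-1}{3}<m\leq\frac d2$. Here the only surviving destabilizing subgroups are those with $k_3/k_1$ near $-2$, of the form $(1,1,-2)$. Weight nonnegativity for $(1,1,-2)$ says every monomial $x^ay^bz^c\partial_z$ has $c=0$ after the relevant normalization. More precisely, the $\partial_z$ component is divisible by $z$, which makes the line $\{z=0\}$ invariant ($X(z)=R\in (z)$). This contradicts the hypothesis of no invariant lines. The multiplicity bound $m\leq d/2$ is what rules out the intermediate ratios. The computation mirrors the case analysis on $A_l,B_l$ in the proof of the preceding lemma.

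The main obstacle is the second step: making the interpolation over all ratios $k_3/k_1\in[-2,-\tfrac12]$ rigorous and uniform in $l=0,1,2$, including the radial ambiguity (we normalize $i_2=0$ as in the paper). I would first try to show the weight is linear in $k_3/k_1$, so it suffices to check the two endpoints together with the multiplicity thresholds.
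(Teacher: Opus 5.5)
Your route differs from the paper's. The paper quotes Alc\'antara's classification of unstable foliations ($m>\frac{d-1}{3}$ with an invariant line, or $m>\frac d2$), concludes that $\F$ is semistable, and then uses finiteness of $\mathrm{Aut}(\F)$ to upgrade to stable. A direct Hilbert--Mumford computation is a reasonable alternative, but as written it has genuine gaps.

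\textbf{Step 2 reduction.} The reduction to the two endpoints runs the wrong way. Each weight is linear in $\lambda$, so if both $(2,-1,-1)$ and $(1,1,-2)$ make all weights of $X$ nonnegative, so does every $\lambda$ in between. You need the converse, and it is false. For even $d$, $\lambda=(1,0,-1)$ gives weight $0$ to $x^{d/2}z^{d/2}\frac{\partial}{\partial y}$ and to $xy^{d-1}\frac{\partial}{\partial x}$, while the first has negative weight for $(2,-1,-1)$ and the second for $(1,1,-2)$.

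What works is to bound, for each $\lambda$, the largest weight in each local degree $s$. Since $k_2\ge k_3$, it is attained at $x^{d-s}z^{s}\frac{\partial}{\partial y}$ (analogously in the other slots). This gives $s\ge\frac{k_1d-k_2}{2k_1+k_2}$, which equals $\frac{2d+1}{3}$, $\frac d2$, $\frac{d-1}{3}$ at $(2,-1,-1)$, $(1,0,-1)$, $(1,1,-2)$ respectively.

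Even so, the endpoint $(1,1,-2)$ only yields $m\ge\frac{d-1}{3}$. Equality is possible when $\mu=0$, e.g. $x^{2k+1}z^{k}\frac{\partial}{\partial y}$ for $d=3k+1$. So the case $m=\frac{d-1}{3}$ of the first bullet is not excluded by inequalities alone. Your remark that the finite abelian hypothesis excludes $\mu=0$ is incorrect: a finite automorphism group only rules out $\lambda\subset\mathrm{Aut}(\F)$, not $\mu(X,\lambda)=0$. The missing observation is this: $\mu(X,(1,1,-2))\ge 0$ forces $z\mid P$, $z\mid Q$ and $R\equiv 0$ modulo the radial field. Then the whole line $\{z=0\}$ is singular, contradicting uniqueness of the singular point.

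\textbf{Step 3.} This step isolates the wrong subgroups. By the observation just made, $(1,1,-2)$ never destabilizes a foliation with isolated singularities, so it cannot be the only surviving one. Nor does $m\le\frac d2$ eliminate the intermediate ratios, since $m=\frac d2$ is attained at $(1,0,-1)$. The correct dichotomy is the sign of $k_2$:
\begin{itemize}
\item If $k_2<0$, all thresholds exceed $\frac d2$, so $m>\frac d2$.
\item If $k_2\ge 0$, every normalized monomial $x^{a}y^{b}\frac{\partial}{\partial z}$ has weight $-k_1a-k_2b+k_3<0$. Hence $z\mid R$, and $\{z=0\}$ is an invariant line through $p$. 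Moreover, for $\lambda\ne(1,1,-2)$ the $\frac{\partial}{\partial y}$-threshold exceeds $\frac{d-1}{3}$ by $\frac{(k_1-k_2)(d+2)}{3(2k_1+k_2)}>0$, and the other slots give larger bounds, so $m>\frac{d-1}{3}$.
\end{itemize}
Each case contradicts both bullets, and $(1,1,-2)$ itself has been excluded.

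With these repairs your argument is complete. It never uses the finite abelian automorphism hypothesis, because it treats $\mu=0$ directly. The paper, by contrast, relies on that hypothesis to pass from semistable to stable, and in general GIT that passage needs more than a finite stabilizer.
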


\begin{proof}
    If we assume that $\F$ is an unstable foliation, by the \cite[Theorem 1.1]{A10U} we have that $m > \frac{d-1}{3}$ and have a invariant line or $m> \frac{d}{2}$. Then $\F$ is semistable, but since $\F$ has a finite group  of automorphism then $\F$ is stable.
\end{proof}

\begin{thm}
    Let $\F$ be a stable foliation of degree $d$ with isolated singularities. Then $\F$ has a singular point with multiplicity less than $\frac{2d+1}{3}$.
\end{thm}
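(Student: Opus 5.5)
We argue by contraposition. We assume that every singular point of $\F$ has algebraic multiplicity $m\geq \frac{2d+1}{3}$, and we exhibit a $1$-PS $\lambda$ with $\mu(\F,\lambda)\geq 0$. By Theorem \ref{definicionEstabilidad}, this shows that $\F$ is not stable.

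\emph{Setup.} Since the singularities are isolated, Jouanolou's formula $\sum_p\mu_p(\F)=d^2+d+1>0$ guarantees that $Sing(\F)\neq\emptyset$. The action of $SL_3(\C)$ preserves singular points and multiplicities (the Proposition from \cite{Ramon}), so we may move one singular point to $p=[1:0:0]$ and take $m=m_p(\F)$. We will use the diagonal $1$-PS $\lambda_{(2,-1)}$, that is, $(k_1,k_2,k_3)=(2,-1,-1)$, which satisfies $k_1\geq k_2\geq k_3$ and $k_1+k_2+k_3=0$. Writing $j$ for the total degree in $(y,z)$ of a monomial, formula (\ref{CondicionPesos}) gives the following weights:
\begin{align*}
C^{0,d}_{i,j}&=-2(d-j)+j+2=3j-2d+2,\\
C^{1,d}_{i,j}=C^{2,d}_{i,j}&=-2(d-j)+j-1=3j-2d-1.
\end{align*}

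\emph{Normalizing the representative.} Write $P=\sum_j x^{d-j}P_j$, and similarly for $Q$ and $R$, where $P_j,Q_j,R_j\in\C[y,z]_j$. In the chart $x=1$, the local field has components $f=Q-yP$ and $g=R-zP$, and the hypothesis is that both have order $\geq m$. Their degree-$(j+1)$ parts are $Q_{j+1}-yP_j$ and $R_{j+1}-zP_j$. Hence for every $j+1<m$ we get $Q_{j+1}=yP_j$ and $R_{j+1}=zP_j$; in particular $Q_0=R_0=0$. Now subtract $x^{d-1-j}P_j$ times the radial field for each $j\leq m-2$. This does not change the foliation, it kills $P_j$, and it also kills $Q_{j+1}$ and $R_{j+1}$. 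After this normalization, the monomials that actually occur in $X$ are:
\begin{itemize}
\item $\partial_x$-monomials with $j\geq m-1$, whose weight is $\geq 3(m-1)-2d+2=3m-2d-1$;
\item $\partial_y$- and $\partial_z$-monomials with $j\geq m$, whose weight is $\geq 3m-2d-1$.
\end{itemize}

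\emph{Conclusion.} Every monomial occurring in $X$ therefore has weight at least $3m-2d-1$, so $\mu(\F,\lambda_{(2,-1)})\geq 3m-2d-1$. This is $\geq 0$ exactly when $m\geq\frac{2d+1}{3}$. In that case $\F$ fails the stability condition $\mu<0$ for all $1$-PS; when $m>\frac{2d+1}{3}$ it is even unstable. This contradicts stability. Hence some singular point has multiplicity $<\frac{2d+1}{3}$.

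The main obstacle is the normalization step. The $\partial_x$-component $P$ is not controlled by the multiplicity directly, only through the combinations $Q-yP$ and $R-zP$. We must check carefully that the low-order parts of $P$ are exactly absorbed by the radial ambiguity, and that the quantity $\mu$ is computed on this good representative (equivalently, on the class in $\F(d;2)$). We must also confirm that the sign convention of Theorem \ref{definicionEstabilidad} matches the weights computed in (\ref{CondicionPesos}).
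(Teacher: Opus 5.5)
Your proof is correct and follows essentially the same route as the paper: move a singular point to $[1:0:0]$, test with $\lambda_{(2,-1)}$, and read the multiplicity off the $(y,z)$-degrees of the monomials that occur. The only difference is cosmetic. The paper uses the radial ambiguity to remove every $x$-divisible term of the $\partial_x$-component, so all $\partial_x$-weights equal $d+2$, and then argues directly that some $\partial_y$- or $\partial_z$-monomial must have weight $<0$; you remove only the $P_j$ with $j\le m-2$, bound every weight below by $3m-2d-1$, and your check that the bound passes from the representative to its class holds because quotienting by radial multiples is torus-equivariant.
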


\begin{proof}
    Suppose that $[1:0:0]$ is a singular point for the foliation
    \[\F: P(y,z)\frac{\partial}{\partial x}+\left(\sum\limits_{j_{1}=0}^{d}x^{d-j_{1}}Q_{j_{1}}(y,z)\right)\frac{\partial}{\partial y}+\left( \sum\limits_{j_{2}=0}^{d}x^{d-j_{2}}R_{j_{2}}(y,z)\right)\frac{\partial}{\partial z}.\] Since $\F$ is stable, then the Mumford function $\mu(\F,\lambda,)<0$ for every $1$-PS $\lambda$. 
    Consider the $1$-PS $\lambda_{(2,-1)}$. Then the weights of the action of $\lambda_{(2,-1)}$ are 
    \begin{align*}
        C^{0,d}_{i_{0},d}=&d+2,\\
        C^{1,d}_{i_{1},j_{1}}=&-2d+3j_{1}-1,\\
        C^{2,d}_{i_{2},j_{2}}=&-2d+3j_{2}-1.
    \end{align*}
    Since $d+2>0$, then there exist $j_{l}<\frac{2d+1}{3}$ for $l=1$ or $l=2$. Observe that the multiplicity of $\F$ in $[1:0:0]$ is $m=min\{j_{l}~|~j_{l}<\frac{2d+1}{3}\}$. 
\end{proof}

\begin{cor}
    Let $\F$ be a stable foliation with a unique singularity, then the multiplicity of this singularity is less that $\frac{2d+1}{3}$.
\end{cor}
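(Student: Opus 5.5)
The corollary should follow directly from the preceding theorem, which says that a stable foliation of degree $d$ with isolated singularities has a singular point of multiplicity less than $\frac{2d+1}{3}$. The plan is to check the hypotheses and then use uniqueness of the singular point to pin down which point that is.

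First, I would check that $\F$ has isolated singularities. By the conventions of the Introduction, a foliation is given by $P,Q,R$ without common factor, so $Sing(\F)$ is finite. In any case, a unique singular point is trivially isolated. By Jouanolou, $\sum_{p}\mu_{p}(\F)=d^{2}+d+1$, so that point $p_{0}$ carries Milnor number $d^{2}+d+1$ and is a genuine singularity. Since $\F$ is stable, the hypotheses of the previous theorem hold.

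Next, I would normalize coordinates. Using the action of $SL_{3}(\C)$ and the proposition from \cite{Ramon}, multiplicity and singularity are preserved under coordinate changes. So I may move $p_{0}$ to $[1:0:0]$. Stability is preserved as well, since it is a property of the orbit.

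Then I apply the theorem in these coordinates. The theorem produces a singular point of multiplicity $<\frac{2d+1}{3}$; in fact its proof computes the multiplicity at $[1:0:0]$ itself, using the 1-PS $\lambda_{(2,-1)}$. Since $Sing(\F)=\{[1:0:0]\}$, the point produced can only be $p_{0}$. Hence $m_{p_{0}}(\F)<\frac{2d+1}{3}$.

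The only delicate point is to make sure the theorem's conclusion refers to the unique singular point. This is automatic once uniqueness holds. To be safe, I would rerun the weight argument directly at $[1:0:0]$. The component $P$ contributes the positive weight $d+2$. Stability then forces some monomial $x^{d-j}(\cdots)$ in $Q$ or $R$ to have negative weight $-2d+3j-1<0$, that is, $j<\frac{2d+1}{3}$. Such a monomial gives a homogeneous part of degree $j$ in the local expansion at $[1:0:0]$, so $m\le j<\frac{2d+1}{3}$.
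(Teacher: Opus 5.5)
Your proof is correct and matches the paper: the corollary is the preceding theorem combined with uniqueness of the singular point, and your optional re-check is exactly the paper's argument with $\lambda_{(2,-1)}$ at $[1:0:0]$. The one detail you leave implicit in that re-check is that $P$ must first be normalized modulo the radial field so that it does not depend on $x$ (the paper does this by writing $P(y,z)$); only after this normalization do all the $\partial_x$-weights equal $d+2>0$.
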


\subsection*{Acknowledgements} This paper was supported with a Postdoctoral Fellowship from SECIHTI.  I would like to thank Claudia R. Alcántara for her helpful comments and suggestions. 

\end{document}